\documentclass[11pt,reqno, oneside]{amsart}
\usepackage[hidelinks]{hyperref}
\oddsidemargin = 0cm \evensidemargin = 0cm \textwidth = 16.3cm
\usepackage{caption}
\usepackage{amssymb,amsfonts,amscd,amsbsy, color, amsmath}
\usepackage[mathscr]{eucal}
\usepackage{url}
\usepackage{graphicx}
\usepackage{mathtools}
\usepackage[textwidth=25mm,textsize=tiny]{todonotes}
\usepackage{tabularx}
\usepackage{float, color}
\usepackage{placeins}
\usepackage{enumitem}
\usepackage{cite}
\usepackage{soul}

\newtheorem{theorem}{Theorem}[section]
\newtheorem{lemma}[theorem]{Lemma}
\newtheorem{proposition}[theorem]{Proposition}

\theoremstyle{definition}

\newtheorem{remark}[theorem]{Remark}
\numberwithin{equation}{section}

\newcommand{\bea}{\begin{equation}\begin{aligned}}
\newcommand{\eea}{\end{aligned}\end{equation}}

\makeatletter
\@namedef{subjclassname@2020}{
  \textup{2020} Mathematics Subject Classification}
\makeatother

\begin{document}

\title[Generalized rank deviations for overpartitions]{Generalized rank deviations for overpartitions}

\author{Kevin Allen}
\author{Robert Osburn}
\author{Matthias Storzer}

\address{School of Mathematical Sciences, University College Cork, Cork, Ireland}

\email{kallen@ucc.ie}
\email{robert.osburn@ucc.ie}
\email{mstorzer@ucc.ie}

\subjclass[2020]{11P83, 05A17, 11F37, 11F11, 11F27}
\keywords{Overpartitions, generalized rank deviations, Appell--Lerch series}

\date{\today}

\begin{abstract}
We prove formulas for generalized rank deviations for overpartitions. These formulas are in terms of Appell--Lerch series and sums of quotients of theta functions and extend work of Lovejoy and the second author. As an application, we compute a dissection.
\end{abstract}

\maketitle
\section{Introduction}

A partition of a natural number $n$ is a non-increasing sequence of positive integers whose sum is $n$. An overpartition of $n$ is a partition in which the first occurrence of each distinct part may be overlined. For example, the partitions and overpartitions of $4$ are
\begin{equation*}
\begin{gathered}
4, \, 3+1, \, 2+2, \, 2+1+1, \, 1+1+1+1
\end{gathered}
\end{equation*}
and
\begin{equation*}
\begin{gathered}
4, \, \overline{4}, \, 3+1, \, \overline{3} + 1, \, 3 + \overline{1}, \,
\overline{3} + \overline{1}, \, 2+2, \overline{2}
+ 2, 2+1+1, \, \overline{2} + 1 + 1, \, 2+ \overline{1} + 1, \\
\overline{2} + \overline{1} + 1, \, 1+1+1+1, \, \overline{1} + 1 + 1 +1,
\end{gathered}
\end{equation*}
respectively. For a curated survey which highlights the importance of overpartitions in $q$-series, number theory and algebraic combinatorics, see \cite{cl}. The rank of a partition $\lambda$ is the largest part $\ell(\lambda)$ minus the number of parts $\#(\lambda)$. For overpartitions, one can consider two statistics, the rank or the $M_2$-rank which is defined by \cite{love2}
$$
\text{$M_2$-rank} \hspace{.025in} (\pi) = \bigg \lceil \frac{\ell(\pi)}{2}
\bigg \rceil - \#(\pi) + \#(\pi_o) - \chi(\pi)
$$
where $\pi_o$ is the subpartition consisting of the odd non-overlined parts and 
$$
\chi(\pi) := \chi(\text{the largest part of $\pi$ is odd and non-overlined}). 
$$
Throughout, we use the standard notation $\chi(X):=1$ if $X$ is true and $0$ if $X$ is false. A recent topic of interest for these combinatorial objects and their enumerative data is the study of rank deviations \cite{hm2, loover}. For integers $0 \leq a \leq M$ where $M \geq 2$, consider
\begin{equation} \label{dev1}
\overline{D}(a, M) := \sum_{n \geq 0} \biggl( \overline{N}(a, M, n) - \frac{\overline{p}(n)}{M} \biggr) q^n
\end{equation}
and
\begin{equation} \label{dev2}
\overline{D}_{2}(a,M) := \sum_{n \geq 0} \biggl( \overline{N}_{2}(a, M, n) - \frac{\overline{p}(n)}{M} \biggr) q^n
\end{equation}
where $\overline{N}(a, M, n)$ denotes the number of overpartitions of $n$ with rank congruent to $a$ modulo~$M$,  $\overline{N}_{2}(a, M, n)$ denotes the number of overpartitions of $n$ with $M_2$-rank congruent to $a$ modulo~$M$ and $\overline{p}(n)$ is the number of overpartitions of $n$. Some impetuses for finding explicit formulas for the rank deviations (\ref{dev1}) and (\ref{dev2}) are to correct the literature \cite[Remark 1.6]{loover}, provide a general framework in which one can recover all known rank difference identities for overpartitions \cite[Section 4]{loover} and prove the modularity of rank generating functions in arithmetic progressions (e.g., \cite[Section 7]{hm2}) thereby generalizing \cite{bl, dewar, rm}. 

For an integer $d \geq 1$, consider the generating function
\begin{equation} \label{gen}
\begin{aligned}
\mathcal{O}_{d}(z;q) & := \frac{(-q)_{\infty}}{(q)_{\infty}} \Biggl(1 + 2 \sum_{n \geq 1} \frac{(1-z)(1-z^{-1}) (-1)^n q^{n^2 + dn}}{(1-zq^{dn})(1-z^{-1} q^{dn})} \Biggr) \\
& =: \sum_{\substack{m \in \mathbb{Z} \\ n \geq 0}} \overline{N}_{d}(m,n) z^m q^n
\end{aligned}
\end{equation}
where
\begin{equation*}
(x)_{\infty} = (x;q)_{\infty} := \prod_{k=0}^{\infty} (1-xq^{k}).
\end{equation*} 
The $d=1$ case of (\ref{gen}) gives the two-variable generating function for the rank for overpartitions \cite{love1} while the $d=2$ case yields the two-variable generating function for the $M_2$-rank for overpartitions \cite{love2}. The mock and quantum modular properties of (\ref{gen}) were the focus of \cite{dfnsx ,jss}. Motivated by \cite{dfnsx, jss, loover}, our aim is to find explicit formulas for the generalized rank deviations for overpartitions
\begin{equation} \label{dev}
\overline{D}_{d}(a, M) := \sum_{n \geq 0} \biggl( \overline{N}_d(a,M,n) - \frac{\overline{p}(n)}{M} \biggr) q^n
\end{equation}
where 
\begin{equation} \label{Nd}
\overline{N}_d(a,M,n):= \sum_{k \, \equiv \, a \;(\text{mod}\;M)} \overline{N}_{d}(k,n).
\end{equation}

To state our main results, we require some setup. Consider the Appell--Lerch series
\begin{equation} \label{al}
m(x,q,z) := \frac{1}{j(z;q)} \sum_{r \in \mathbb{Z}} \frac{(-1)^r q^{\binom{r}{2}} z^r}{1 - q^{r-1} xz}.
\end{equation}
Here, $x$, $q$ and $z$ are non-zero complex numbers with $| q | < 1$, neither $z$ nor $xz$ is an integral power of $q$ and
\begin{equation} \label{j}
j(z;q) := (z)_{\infty} (q/z)_{\infty} (q)_{\infty} = \sum_{n \in \mathbb{Z}} (-1)^n q^{\binom{n}{2}} z^n.
\end{equation} 
Let
\begin{equation} \label{delta}
\Delta(x,z_1, z_0;q) := \frac{z_0 J_1^3 j(z_1/z_0;q) j(x z_0 z_1; q)}{j(z_0;q) j(z_1;q) j(xz_0;q) j(xz_1;q)}
\end{equation}
and
\begin{equation} \label{Psikndef}
\Psi_{k}^{n}(x,z,z';q) := -\frac{x^k z^{k+1} J_{n^2}^3}{j(z;q) j(z';q^{n^2})} \sum_{t=0}^{n-1} \frac{q^{\binom{t+1}{2} + kt} (-z)^t j(-q^{\binom{n+1}{2} + nk + nt} (-z)^n / z', q^{nt} (xz)^n z'; q^{n^2})}{j(-q^{\binom{n}{2} - nk} (-x)^n z', q^{nt} (xz)^n; q^{n^2})}
\end{equation}
where $J_{m} := (q^m; q^m)_{\infty}$ and $j(z_1,z_2;q) := j(z_1;q)j(z_2;q)$. We use the term ``generic" to mean that the parameters do not cause poles in the Appell--Lerch series or in the quotients of theta functions. Finally, for odd $d \geq 1$ and generic $z$, $z_0$ and $z' \in \mathbb{C}^{*}$, let
\begin{equation} \label{genlam}
\begin{aligned}
\Lambda(d, z, z_0, z') &:= (-1)^{\frac{d+1}{2}} q^{-\frac{(d-1)^2}{4}} z^{\frac{d-1}{d}} \Bigl( \Psi_{\frac{d-1}{2}}^{d}(z^{-\frac{2}{d}} q^d, z_0, z'; q^2) \\
& \qquad \qquad \qquad \qquad \qquad \qquad \qquad \quad + \frac{1}{d} \sum_{t=0}^{d-1} \zeta_{d}^{-t} \Delta(\zeta_d^{-2t} z^{-\frac{2}{d}} q^d, \zeta_d^{t} z^{\frac{1}{d}} q^{-\frac{d-1}{2}}, z_0; q^2) \Bigr).
\end{aligned}
\end{equation}
Here and throughout, $\zeta_M$ denotes a primitive root of unity of order $M$.

We can now state our two main results in which explicit formulas are given for the pair of deviations $\overline{D}_d(a, M) + \overline{D}_{d}(a-1, M)$. Theorem \ref{main1} addresses the case of odd $d$
and is slightly more involved with the appearance of the additional term (\ref{genlam}). Theorem \ref{main2} treats the case of even $d$ and is cleaner. As discussed in Section 4, there is no loss in generality in computing pairs of deviations. Our results can be used to find a formula for any single $\overline{D}_{d}(a, M)$. 

\begin{theorem} \label{main1} Let $d \geq 1$ be an odd integer and $2 \leq a \leq M$. For generic $z'$, $z''$ and $z_0 \in \mathbb{C}^{*}$, we have the following generating functions: 

\begin{itemize}
\item[($i$)] If $a$ and $M$ are even, then
\begin{equation} \label{e1}
\begin{aligned}
\overline{D}_d(a, M) + \overline{D}_{d}(a-1, M)  \, &=\,  \chi(a=M) \\
& +\, 2(-1)^{\frac{a}{2}} q^{-\frac{a^2}{4} + \frac{a}{2}(1-d^2)} m((-1)^{\frac{M}{2} + 1} q^{\frac{M^2}{4} - \frac{aM}{2} + \frac{M}{2}(1-d^2)}, q^{\frac{M^2}{2}}, z') \\
& -\, 2q^{-d^2} \Psi_{\frac{a}{2} - 1}^{\frac{M}{2}}(q^{-d^2}, -1, z'; q^2) \\
& +\, \frac{2}{M} \sum_{j=1}^{M} \zeta_{\frac{M}{2}}^{-\frac{aj}{2}} (1 - \zeta_M^{j}) \Lambda(d, \zeta_{M}^j, z_0, -1).
\end{aligned}
\end{equation}

\item[($ii$)] If $a$ is even and $M$ is odd, then 
\begin{equation} \label{e2}
\begin{aligned}
\overline{D}_d(a, M) + \overline{D}_{d}(a-1, M)  \, &=\,  2(-1)^{\frac{a}{2}} q^{-d^2(\frac{2M-a}{2})^2} m(q^{d^2M(a-M)}, q^{2d^2 M^2}, z') \\
&+\,2(-1)^{\frac{M+1-a}{2}} q^{-d^2 (\frac{M+1-a}{2})^2} m(q^{d^2 M(a-1)}, q^{2d^2 M^2}, z'')\\
&-\,2\Psi_{\frac{2M-a}{2}}^{M}(q^{d^2},-1,z';q^{2d^2}) +2\Psi_{\frac{M+1-a}{2}}^{M}(q^{d^2},-1,z'';q^{2d^2}) \\
& +\, \frac{2}{M} \sum_{j=1}^{M-1} \zeta_{M}^{-aj} (1 - \zeta_M^j) \Lambda(d, \zeta_{M}^j, z_0, -1).
\end{aligned}
\end{equation}

\item[($iii$)] If $a$ is odd and $M$ is odd, then
\begin{equation} \label{e3}
\begin{aligned}
\overline{D}_d(a, M) + \overline{D}_{d}(a-1, M)  \, &=\,  \chi(a=M) \\
& -\, 2 (-1)^{\frac{M-a}{2}} q^{-d^2 (\frac{M-a}{2})^2} m(q^{d^2 Ma} , q^{2d^2M^2}, z')\\
&+\, 2 (-1)^{\frac{a+1}{2}} q^{-d^2 (\frac{2M-a+1}{2})^2} m(q^{d^2 M(a-M-1)}, q^{2d^2M^2},z'')\\
&-\,2\Psi_{\frac{M-a}{2}}^{M}(q^{d^2},-1,z';q^{2d^2}) +2\Psi_{\frac{2M+1-a}{2}}^{M}(q^{d^2},-1,z'';q^{2d^2}) \\
& +\, \frac{2}{M}\sum_{j=1}^{M-1}\zeta_{M}^{-aj} (1-\zeta_{M}^{j}) \Lambda(d, \zeta_{M}^{j}, z_0, -1).
\end{aligned}
\end{equation}
\end{itemize}
\end{theorem}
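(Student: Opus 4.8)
The plan is to start from the two–variable generating function (\ref{gen}) and extract the residue of the rank in a fixed arithmetic progression. Concretely, I would use the standard orthogonality of roots of unity to write
\[
\overline{N}_d(a,M,n) - \frac{\overline{p}(n)}{M} = \frac{1}{M}\sum_{j=1}^{M-1}\zeta_M^{-aj}\,\overline{N}_d(\zeta_M^j,n),
\]
so that summing over $n$ gives $\overline{D}_d(a,M) = \frac{1}{M}\sum_{j=1}^{M-1}\zeta_M^{-aj}\,\mathcal{O}_d(\zeta_M^j;q)$. The combination $\overline{D}_d(a,M)+\overline{D}_d(a-1,M)$ is taken because the coefficient $(\zeta_M^{-aj}+\zeta_M^{-(a-1)j}) = \zeta_M^{-aj}(1+\zeta_M^{j})$ pairs well with the factor $(1-z^{-1})$ appearing in (\ref{gen}); after a short manipulation (replacing $z=\zeta_M^j$ and using $1-\zeta_M^{-j} = -\zeta_M^{-j}(1-\zeta_M^j)$) one isolates exactly the weight $\zeta_{M}^{-aj}(1-\zeta_M^j)$ that occurs in the last sums of (\ref{e1})--(\ref{e3}), together with a leading non–root–of–unity contribution producing the $\chi(a=M)$ terms via the $j=M$ (i.e. $z=1$) boundary case.

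The heart of the argument is then a closed evaluation of $\mathcal{O}_d(z;q)$ at a generic root of unity $z$ in terms of Appell--Lerch series $m$ and the theta quotients $\Delta$ and $\Psi_k^n$. I would proceed in two steps. First, rewrite the Hecke–type double sum defining $\mathcal{O}_d$ as a single Appell–Lerch sum: the factor $\frac{(1-z)(1-z^{-1})(-1)^nq^{n^2+dn}}{(1-zq^{dn})(1-z^{-1}q^{dn})}$ should be split by partial fractions in $q^{dn}$, after which the two resulting geometric-type pieces combine (using $j(z;q)j(q/z;q)=\dots$) into objects of the shape $m(x,q,z)$; this is the mechanism by which the mock–modular part of the overpartition rank (in the $d=1,2$ cases already in the literature) is obtained, and for general odd $d$ it must be carried out carefully tracking the exponents $q^{n^2+dn}$, which is where the $-\frac{(d-1)^2}{4}$ and $-d^2$ powers of $q$ enter. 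Second, I would apply a change of summation index $n \mapsto$ (residue mod $d$, quotient) — i.e. a $d$–dissection of the $n$–sum — to reorganize $\mathcal O_d(z;q)$ as a $q^2$–series; this $d$–dissection is precisely what manufactures the finite sums $\sum_{t=0}^{d-1}\zeta_d^{-t}\Delta(\dots)$ and $\Psi_{(d-1)/2}^{d}(\dots;q^2)$ assembled in the definition (\ref{genlam}) of $\Lambda(d,z,z_0,z')$. I expect I would need to invoke (or re-derive) the Appell–Lerch ``changing the parameter'' and splitting lemmas of Hickerson–Mortenson type, which give identities of the form $m(x,q,z_1)-m(x,q,z_0) = \Delta(x,z_1,z_0;q)$ and the $n$–fold dissection formula $m(x,q,z) = \sum_t q^{\dots}m(\dots;q^{n^2}) + \sum_t \Psi$, matching the shapes (\ref{delta}) and (\ref{Psikndef}) already recorded above; these are the load-bearing black boxes.

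Finally, the three cases of the theorem come from the interplay between the parity of $M$ (which controls whether $q^{M^2/2}$ versus $q^{2d^2M^2}$ is the relevant modulus, i.e. whether the Appell–Lerch series has ``base'' $q^{M^2/2}$ or $q^{2d^2M^2}$) and the parity of $a$ (which shifts the indices $\frac{a}{2}-1$, $\frac{2M-a}{2}$, $\frac{M+1-a}{2}$, etc. in the $\Psi$ and $m$ terms). Concretely, I would: (1) substitute $z=\zeta_M^j$ into the closed form for $\mathcal O_d$; (2) split the sum over $j$ into the part where $z^{2/d}$ and $z^{1/d}$ are still roots of unity of a nice order versus boundary $j$ where the theta quotients degenerate — those boundary terms, after using $j(1;q)=0$–type cancellations, collapse to single $m$–terms and a $\Psi$–term (the second and third lines of each $(i)$–$(iii)$) plus the constant $\chi(a=M)$; (3) collect the remaining $j$ into $\frac{2}{M}\sum_j \zeta^{-aj}(1-\zeta_M^j)\Lambda(d,\zeta_M^j,z_0,-1)$. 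The main obstacle, and where the real work lies, is step (1)–(2): keeping the bookkeeping of exponents under the $d$–dissection consistent so that the specialization $z=\zeta_M^j$ produces \emph{exactly} the stated $q$–powers and Appell–Lerch bases (the $-d^2(\tfrac{2M-a}{2})^2$-type exponents), and correctly identifying which values of $j$ are ``generic'' versus ``boundary''; the roots-of-unity orthogonality and the algebraic manipulation in step (3) are, by contrast, routine. One should also check the genericity hypotheses on $z'$, $z''$, $z_0$ ensure no poles appear in $m$, $\Delta$ or $\Psi_k^n$ at the specialized arguments, and that the two sides agree as formal $q$–series (equivalently, as meromorphic functions of the auxiliary parameters), which follows once the intermediate identities are applied within their stated domains of validity.
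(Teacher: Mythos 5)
Your overall skeleton matches the paper's: you reduce $\overline{D}_d(a,M)+\overline{D}_d(a-1,M)$ via roots-of-unity orthogonality to $\frac{1}{M}\sum_{j}\zeta_M^{-aj}(1+\zeta_M^j)\mathcal{O}_d(\zeta_M^j;q)$, and you propose a closed evaluation of $\mathcal{O}_d(z;q)$ in terms of $m$, $\Delta$ and $\Psi_k^n$ using the Hickerson--Mortenson change-of-parameter and orthogonality lemmas; this is exactly the role of $\overline{S}_d(z;q)=(1+z)\mathcal{O}_d(z;q)$, of \eqref{overkey} and of Proposition \ref{key} in the paper (whose proof uses partial fractions, the shift $n\to n-\frac{d-1}{2}$, the expansion $\frac{1}{1-x^d}=\frac1d\sum_t\frac{1}{1-\zeta_d^tx}$, \eqref{htom}, Lemma \ref{switch} and Lemma \ref{orthog}, assembling $\Lambda$).

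The genuine gap is in your final step, where you claim the single Appell--Lerch terms and the $\Psi^{M}$ (or $\Psi^{M/2}$) terms in the second and third lines of \eqref{e1}--\eqref{e3} arise from ``boundary $j$'' where the theta quotients degenerate and $j(1;q)=0$-type cancellations occur. That is not a workable mechanism and it is not what happens: after inserting $\overline{S}_d(\zeta_M^j;q)=(1-\zeta_M^j)\bigl(1-2m(\zeta_M^{-2j}q^{d^2},q^{2d^2},-1)+2\Lambda(d,\zeta_M^j,z_0,-1)\bigr)$ into \eqref{overkey}, \emph{every} $j$ contributes an $m(\zeta_M^{-2j}q^{d^2},q^{2d^2},-1)$, and the essential step is a \emph{second} application of the orthogonality Lemma \ref{orthog}, now to the $j$-indexed sums $\sum_{j=0}^{M-1}\zeta_M^{-aj}m(\zeta_M^{-2j}q^{d^2},q^{2d^2},-1)$ and the $(a-1)$-weighted analogue: for $M$ odd one uses that $\zeta_M^{-2}$ is again a primitive $M$-th root of unity and takes $n=M$, $q\to q^{2d^2}$, $x=q^{d^2}$ with $k=\frac{2M-a}{2},\frac{M+1-a}{2}$ (resp.\ $\frac{M-a}{2},\frac{2M-a+1}{2}$), producing the $m(\cdot,q^{2d^2M^2},z')$, $m(\cdot,q^{2d^2M^2},z'')$ and $\Psi^M_k(\cdot;q^{2d^2})$ terms; for $M$ even one first halves the range (reindexing $j\to j+\frac M2$, $\zeta_M^{-2}=\zeta_{M/2}^{-1}$), applies \eqref{flip1}, and uses $n=\frac M2$, $k=\frac a2-1$, while the $(a-1)$-weighted sum vanishes by the pairing $j\leftrightarrow j+\frac M2$ since $a-1$ is odd. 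Your proposal never invokes this resummation over $j$, and without it the stated bases $q^{2d^2M^2}$, $q^{M^2/2}$ and exponents such as $-d^2(\frac{2M-a}{2})^2$ cannot be produced; note also that the $\Lambda$-sum in the final answer runs over \emph{all} $j$, not merely ``non-boundary'' ones, and the $\chi(a=M)$ term comes from orthogonality applied to the constant $1$ in the closed form of $\overline{S}_d$ (i.e.\ $a\equiv 0\pmod M$), not from a $z=1$ boundary term.
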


\begin{theorem} \label{main2} Let $d \geq 2$ be an even integer and $1 \leq a \leq M-1$. For generic $z'$, $z'' \in \mathbb{C}^{*}$, we have the generating function
\begin{equation} \label{e4}
\begin{aligned}
\overline{D}_d(a, M) + \overline{D}_{d}(a-1, M) \, &=\,  \chi(a=1) + 2(-1)^{\frac{da}{2}} q^{-\frac{d^2 a^2}{4}} m((-1)^{1+ \frac{dM}{2}} q^{\frac{d^2}{4}(M^2 - 2Ma)}, q^{\frac{d^2 M^2}{2}}, z') \\
& +\, 2(-1)^{\frac{d}{2}(a-1) + 1} q^{-\frac{d^2}{4}(a^2 - 2a + 1)} m((-1)^{1+\frac{dM}{2}} q^{\frac{d^2}{4}(M^2 - 2M(a-1))}, q^{\frac{d^2 M^2}{2}}, z'') \\
& +\, 2 \Psi_{a}^{M}((-1)^{\frac{d}{2} + 1} q^{\frac{d^2}{4}}, -1, z'; q^{\frac{d^2}{2}})  - 2\Psi_{a-1}^{M}((-1)^{\frac{d}{2} + 1} q^{\frac{d^2}{4}}, -1, z''; q^{\frac{d^2}{2}}) \\
& +\, \frac{2}{M} (-1)^{\frac{d}{2}} q^{-\frac{d^2}{4}} \sum_{j=1}^{M-1} \zeta_M^{j-aj} (1 - \zeta_M^{j}) \Psi_{0}^{\frac{d}{2}}(\zeta_M^{\frac{2j}{d}} q^{1-d}, q, -1; q^2).
\end{aligned}
\end{equation}
\end{theorem}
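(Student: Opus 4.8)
The plan is to run the argument that establishes Theorem~\ref{main1}, now for even $d$, where the key simplification is that completing the square in the exponent of \eqref{gen}, $n^{2}+dn=(n+\tfrac d2)^{2}-\tfrac{d^{2}}4$, involves the \emph{integral} shift $d/2$; this is exactly what converts the half-integral twists of the odd case into the parity factors $(-1)^{d/2}$, $(-1)^{dM/2}$ appearing in \eqref{e4}. The first step is the passage to roots of unity. Orthogonality gives $\overline{N}_d(a,M,n)=\tfrac1M\sum_{j=0}^{M-1}\zeta_M^{-aj}\,[q^n]\,\mathcal O_d(\zeta_M^j;q)$, and since the bracketed sum in \eqref{gen} vanishes at $z=1$ one has $\mathcal O_d(1;q)=(-q)_\infty/(q)_\infty$, so the $j=0$ term is exactly $\overline p(n)/M$ and
\[
\overline{D}_d(a,M)+\overline{D}_d(a-1,M)=\frac1M\sum_{j=1}^{M-1}\zeta_M^{-aj}\bigl(1+\zeta_M^{j}\bigr)\,\mathcal O_d(\zeta_M^j;q).
\]
The reason for summing the two deviations is that, after using $(1-z)(1-z^{-1})=-z^{-1}(1-z)^{2}$ and a partial-fraction split of $1/\bigl((1-zq^{dn})(1-z^{-1}q^{dn})\bigr)$, a factor $1+z$ appears in a denominator; at $z=\zeta_M^{j}$ it is cancelled by the Fourier factor $1+\zeta_M^{j}$, leaving the polynomial weight $1-\zeta_M^{j}$ that one sees in the last line of \eqref{e4}. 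Everything is thereby reduced to a sufficiently explicit evaluation of $\mathcal O_d(z;q)$ for generic $z$.

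That evaluation is the core of the proof. Writing $d=2e$, completing the square and shifting the summation index by $e$ turns the series in \eqref{gen}, after the partial-fraction split, into Appell--Lerch-type sums of base $q^{2}$ (the natural base: $q^{N^{2}}=(q^{2})^{\binom N2}q^{N}$, the stray $q^{N}$ being absorbed into the $z$-parameter of $m(x,q,z)$). Their denominators, of the shape $1-(\cdot)\,q^{2eN}$, do not match the denominators $1-(q^{2})^{r-1}(\cdot)$ of $m(\,\cdot\,,q^{2},\,\cdot\,)$, so one dissects the defining series of these $m$'s modulo $e=d/2$; by the $n$-dissection formula for Appell--Lerch series underlying \eqref{Psikndef} this replaces $m(\,\cdot\,,q^{2},\,\cdot\,)$ by $m(\,\cdot\,,q^{d^{2}/2},\,\cdot\,)$ together with $\Psi_{k}^{d/2}(\,\cdot\,;q^{2})$-corrections. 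Because the shift $d/2$ is integral, all theta-quotient corrections produced this way are of $\Psi$-type (the $\Delta$-terms of \eqref{delta}, which the odd case needs, do not appear here), and one arrives at a formula expressing $\mathcal O_d(z;q)$ as $(-q)_\infty/(q)_\infty$ plus an $m\bigl(X(z),q^{d^{2}/2},\cdot\bigr)$ term plus a $\Psi_{0}^{d/2}\bigl(z^{2/d}q^{1-d},q,-1;q^{2}\bigr)$ term, with $X(z)$ a monomial in $z$; this is the even-$d$ analogue of the identity underlying Theorem~\ref{main1}.

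Substituting this evaluation into the root-of-unity sum finishes the proof. Since $X(z)$ is a monomial, $X(\zeta_M^{j})=\zeta_M^{cj}X_{0}$, so the contribution $\tfrac1M\sum_{j=1}^{M-1}\zeta_M^{-aj}(1+\zeta_M^{j})\,m\bigl(X(\zeta_M^{j}),q^{d^{2}/2},\cdot\bigr)$ is, after expanding $m$ as its defining series, the extraction of two consecutive residue classes modulo $M$ from $m\bigl(X_{0},q^{d^{2}/2},\cdot\bigr)$; the $M$-dissection of an Appell--Lerch series --- again the identity behind \eqref{Psikndef}, now the instance that introduces $\Psi_{a}^{M}$ and $\Psi_{a-1}^{M}$ at base $q^{d^{2}/2}$ and promotes the $m$ to base $q^{d^{2}M^{2}/2}$ --- converts this into the two $m$-terms and the two $\Psi^{M}$-terms, i.e.\ the first four lines of \eqref{e4}. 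The $\Psi_{0}^{d/2}$ term, whose argument is again a monomial in $z$, passes through the sum with the weight $1-\zeta_M^{j}$ to produce the final $\tfrac2M\sum_{j=1}^{M-1}$ line. Finally the leftover $(-q)_\infty/(q)_\infty$-multiples from the first step (a $\chi(a=1)$ piece and a $-\tfrac2M$ piece) combine with the low-order expansions of the $m$'s and $\Psi$'s to leave precisely the Boolean $\chi(a=1)$ of \eqref{e4}.

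The step I expect to be the main obstacle is the generic-$z$ evaluation of $\mathcal O_d(z;q)$: although it parallels the odd-$d$ computation, the two successive dissections --- first by $d/2$, then by $M$ --- must be organized so that the bases fall into the chain $q^{2}\to q^{d^{2}/2}\to q^{d^{2}M^{2}/2}$ and so that the $\Psi_{k}^{n}$'s emerge with \emph{exactly} the arguments prescribed by \eqref{Psikndef}. The remainder is a long but routine bookkeeping: tracking the signs and powers of $q$ (in particular the parity factors $(-1)^{d/2}$, $(-1)^{dM/2}$ and the internal shifts $\binom{t+1}{2}+kt$ built into $\Psi_{k}^{n}$), and keeping $z'$ and $z''$ generic throughout so as to avoid the poles of the Appell--Lerch series and the theta quotients.
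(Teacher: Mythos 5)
Your proposal follows essentially the same route as the paper: the reduction \eqref{overkey} with $\overline{S}_d(z;q)=(1+z)\mathcal{O}_d(z;q)$, the even-$d$ evaluation of Proposition \ref{key} obtained exactly as you describe (shift $n\to n-\tfrac d2$, partial fractions into $d/2$ pieces giving Appell--Lerch sums at base $q^2$, then Lemma \ref{orthog} with $n=\tfrac d2$, $k=0$ producing the $m(\cdot,q^{d^2/2},\cdot)$ and $\Psi_0^{d/2}$ terms and no $\Delta$-corrections), and finally Lemma \ref{orthog} with $n=M$, $k=a$ and $k=a-1$ applied to the root-of-unity sum, which yields the two $m$-terms at base $q^{d^2M^2/2}$, the two $\Psi^M$-terms, and the untouched $\Psi_0^{d/2}$ line. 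One cosmetic correction: the generic-$z$ formula is $\overline{S}_d(z;q)=(1-z)\bigl(-1+2m((-1)^{\frac d2+1}zq^{\frac{d^2}4},q^{\frac{d^2}2},z')+2(-1)^{\frac d2}zq^{-\frac{d^2}4}\Psi_0^{\frac d2}(z^{\frac 2d}q^{1-d},q,z';q^2)\bigr)$ (after \eqref{flip2}), with no standalone $(-q)_\infty/(q)_\infty$ piece, so the $\chi(a=1)$ in \eqref{e4} comes directly from \eqref{sim} applied to the constant $-1$ weighted by $\zeta_M^{-aj}(1-\zeta_M^j)$, not from cancellations against low-order expansions of the $m$'s and $\Psi$'s.
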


\begin{remark}
From \cite[page 1154]{jss} and (\ref{Nd}), we have
$$
\overline{N}_{d}(a,M,n) = \overline{N}_{d}(M-a,M,n)
$$
and so by (\ref{dev})
\begin{equation} \label{Dgensymmetry}
\overline{D}_{d}(a, M)  =  \overline{D}_{d}(M-a, M).
\end{equation}
If $d$ and $a$ are odd and $M$ is even, then $\overline{D}_d(a, M) + \overline{D}_d(a-1,M)$ is computed from Theorem~\ref{main1}~(i) using the fact that
\begin{equation*}
\overline{D}_d(a, M) + \overline{D}_d(a-1,M) =  \overline{D}_d(M-a+1, M) + \overline{D}_d(M-a, M)
\end{equation*}
which follows from \eqref{Dgensymmetry}.
\end{remark}

\begin{remark} Let $n \in \mathbb{Z}$. If we take $d=1$ and $z_0=-1$ in Theorem \ref{main1} and use 
\begin{equation} \label{jvan}
j(q^n ; q) = 0,
\end{equation}
then one recovers \cite[Theorem 1.1]{loover}. If we let $d=2$  in Theorem \ref{main2} and employ \cite[Eqs. (2.2a) and (2.2b)]{hm1}
\begin{equation} \label{j1}
j(q^n x; q) = (-1)^n q^{-\binom{n}{2}} x^{-n} j(x;q)
\end{equation}
and 
\begin{equation} \label{j2}
j(x;q) = j(q/x; q) = -x j(x^{-1}; q),
\end{equation}
then we obtain \cite[Theorem 1.2]{loover}.
\end{remark}

Although it is not our primary focus, Theorems \ref{main1} and \ref{main2} in combination with the techniques in \cite[Section 7]{hm2} can be used to deduce the modularity of $\overline{D}_{d}(a,M)$ in arithmetic progressions. This is a ``higher-rank" overpartition analogue of the celebrated results of Bringmann and Ono \cite[Section 7]{zagb}. One can also obtain the mock modularity of $\mathcal{O}_{d}(z;q)$ where $z$ is a root of unity. The idea is to express $\mathcal{O}_{d}(z;q)$ as a sum of pairs of rank deviations (e.g., see (\ref{rewrite})). One then applies Theorem \ref{main1} or \ref{main2} to express $\mathcal{O}_{d}(z;q)$ in terms of Appell--Lerch series and sums of theta quotients (e.g., see (\ref{Main1})). As it is well-known that specializations of Appell--Lerch series are mock theta functions \cite{zwegers}, the mock modularity of $\mathcal{O}_{d}(z;q)$ follows. Thus, our results imply a more explicit version of \cite[Theorems 1.1 and 1.2]{jss}. Finally, Theorems \ref{main1} and \ref{main2} provide a general framework in which one can recover any dissection of $\mathcal{O}_{d}(z;q)$ where $z$ is a root of unity and any generalized rank difference identity. See Section 5 for an example. 

The paper is organized as follows. In Section~2, we recall the required background on Appell--Lerch series and prove two crucial formulas (see Proposition \ref{key}) which express a normalized version of $\mathcal{O}_{d}(z;q)$ in terms of Appell--Lerch series and (\ref{genlam}) if $d$ is odd and in terms of Appell--Lerch series and (\ref{Psikndef}) if $d$ is even. We also give some other key properties of (\ref{j}) and establish $3$-dissections of certain eta quotients, both of which will be beneficial in Section~5. In Section~3, we prove Theorems \ref{main1} and \ref{main2}. In Section~4, we explain why there is no loss in generality in considering pairs of generalized rank deviations in Theorems \ref{main1} and \ref{main2}. In Section~5, we provide an application of Theorem \ref{main1} by computing the $3$-dissection of $\mathcal{O}_{3}(\zeta_3;q)$ (cf. \cite[Theorem 1.3]{jss}).

Finally, we comment that the combinatorial interpretation of $\overline{N}_{d}(m,n)$ involves a certain weighted count of buffered Frobenius representations \cite[Theorem 1.3]{morrill}. It is still an open problem to find a proper combinatorial definition of the ``$M_{d}$-rank" for overparitions \cite[Section~6]{morrill}.

\section{Preliminaries}

\subsection{Appell--Lerch series and $\overline{S}_{d}(z;q)$}

We first state two key results concerning Appell--Lerch series. The first relates two such series with different generic parameters $z_1$ and $z_0$ \cite[Theorem~3.3]{hm1} while the second is an orthogonality result \cite[Theorem~3.9]{hm1}. Recall (\ref{al}), (\ref{delta}) and (\ref{Psikndef}).

\begin{lemma} \label{switch} For generic $x$, $z_0$ and $z_1 \in \mathbb{C}^{*}$,
\begin{equation}
m(x,q,z_1) - m(x,q,z_0) = \Delta(x,z_1, z_0;q).
\end{equation}
\end{lemma}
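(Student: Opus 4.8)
The plan is to fix generic $x$, $q$ and $z_0$, regard both sides of the claimed identity as meromorphic functions of the single variable $z_1 \in \mathbb{C}^{*}$, and show that their difference
\[
G(z_1) := m(x,q,z_1) - m(x,q,z_0) - \Delta(x,z_1,z_0;q)
\]
vanishes identically. The first step is to show that $G$ is invariant under $z_1 \mapsto q z_1$. For the Appell--Lerch piece this follows by replacing $r$ by $r-1$ in the defining series \eqref{al} and using $j(qw;q) = -w^{-1} j(w;q)$ (the $n=1$ case of \eqref{j1}), which yields $m(x,q,qz_1) = m(x,q,z_1)$; for $\Delta$ one applies the same theta shift to each of the four factors of \eqref{delta} that depend on $z_1$ and checks that the multipliers cancel, so that $\Delta(x, q z_1, z_0; q) = \Delta(x, z_1, z_0; q)$.

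The second step is a pole count. The series in \eqref{al} converges locally uniformly on $\mathbb{C}^{*} \setminus x^{-1} q^{\mathbb{Z}}$ with at worst simple poles on $x^{-1} q^{\mathbb{Z}}$ (from the denominators $1 - q^{r-1}xz_1$), while $1/j(z_1;q)$ contributes at worst simple poles on $q^{\mathbb{Z}}$; likewise the only $z_1$-poles of $\Delta$ lie in $q^{\mathbb{Z}} \cup x^{-1} q^{\mathbb{Z}}$, coming from the factors $j(z_1;q)$ and $j(xz_1;q)$ in its denominator. By the periodicity from Step~1 it therefore suffices to examine $G$ at the representatives $z_1 = x^{-1}$ and $z_1 = 1$. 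At $z_1 = x^{-1}$ the residue of $m(x,q,z_1)$ comes only from the $r=1$ term of the sum divided by $j(x^{-1};q)$, and the residue of $\Delta(x,z_1,z_0;q)$ comes only from the factor $1/j(xz_1;q)$; using \eqref{j2} in the forms $j(x^{-1};q) = -x^{-1}j(x;q)$ and $j((xz_0)^{-1};q) = -(xz_0)^{-1}j(xz_0;q)$, together with $j(w;q) \sim (1-w)J_1^3$ as $w \to 1$ (immediate from \eqref{j}), both residues evaluate to $-x^{-1}/j(x;q)$. Hence $G$ is holomorphic at $z_1 = x^{-1}$, and by periodicity on all of $x^{-1}q^{\mathbb{Z}}$.

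Thus $G$ descends to a meromorphic function on the compact Riemann surface $\mathbb{C}^{*}/q^{\mathbb{Z}}$ whose only possible pole is the image of $z_1 = 1$, and that pole would be simple. Since the residues of a meromorphic function on a compact Riemann surface sum to zero, a solitary simple pole is impossible, so $G$ is in fact holomorphic on $\mathbb{C}^{*}$; being invariant under $z_1 \mapsto q z_1$ it is bounded on the annulus $|q| \le |z_1| \le 1$, hence on all of $\mathbb{C}^{*}$, hence constant in $z_1$ by Liouville's theorem. (Alternatively, the residue at $z_1 = 1$ can be matched directly from the classical identity $\sum_{r \in \mathbb{Z}} (-1)^r q^{\binom{r}{2}}/(1 - q^{r-1}x) = -J_1^3/j(x;q)$, which follows from \eqref{j}.) To pin down the constant I would let $z_1 \to z_0$: the first two terms of $G$ cancel, while $\Delta(x, z_0, z_0; q) = 0$ because it carries the factor $j(1;q) = 0$; therefore $G \equiv 0$, which is exactly the assertion. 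I expect the only real work to be the bookkeeping in Step~2 --- confirming that the singularities are confined to $q^{\mathbb{Z}} \cup x^{-1}q^{\mathbb{Z}}$ and that the residues at $z_1 = x^{-1}$ genuinely agree --- after which the Liouville argument and the evaluation of the constant are routine.
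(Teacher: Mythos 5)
Your argument is correct, but note that the paper does not actually prove this lemma: it is quoted verbatim as Theorem~3.3 of Hickerson--Mortenson \cite{hm1}, so there is no in-paper proof to mirror. What you supply is a self-contained classical elliptic-function argument, and the details check out: the quasi-periodicity $m(x,q,qz_1)=m(x,q,z_1)$ via the shift $r\mapsto r-1$ together with $j(qz_1;q)=-z_1^{-1}j(z_1;q)$, and the matching multiplier $1/(xz_1^2)$ in numerator and denominator of $\Delta$; the confinement of the $z_1$-poles to $q^{\mathbb{Z}}\cup x^{-1}q^{\mathbb{Z}}$ with at most simple order; the residue computation at $z_1=x^{-1}$ (both sides give $-x^{-1}/j(x;q)$, using $j(w;q)\sim(1-w)J_1^3$ and \eqref{j2}); the residue-theorem step ruling out a lone simple pole at the image of $z_1=1$ on $\mathbb{C}^{*}/q^{\mathbb{Z}}$; the Liouville conclusion; and the evaluation of the constant at $z_1=z_0$, where $\Delta$ vanishes because of the factor $j(1;q)=0$. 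Two small caveats: your parenthetical claim that $\sum_{r\in\mathbb{Z}}(-1)^r q^{\binom r2}/(1-q^{r-1}x)=-J_1^3/j(x;q)$ ``follows from \eqref{j}'' is an overstatement --- that partial-fraction expansion is a classical but nontrivial identity requiring its own argument --- though it is inessential since your residue-sum argument already disposes of the point $z_1=1$; and you should say explicitly that genericity of $x$ and $z_0$ guarantees $q^{\mathbb{Z}}\cap x^{-1}q^{\mathbb{Z}}=\emptyset$ and that $z_0$ avoids the pole set, so the evaluation at $z_1=z_0$ is legitimate. With those remarks your proof stands as a complete alternative to simply citing \cite{hm1}, which is all the paper itself does; the citation buys brevity, while your route makes the lemma independent of the Hickerson--Mortenson machinery.
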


\begin{lemma} \label{orthog} Let $n$ and $k$ be integers with $0 \leq k < n$. Then
\begin{equation} 
\sum_{t=0}^{n-1} \zeta_n^{-kt} m(\zeta_n^t x, q, z) = n q^{-\binom{k+1}{2}} (-x)^k m(-q^{\binom{n}{2} - nk} (-x)^n, q^{n^2}, z') + n \Psi_{k}^{n}(x,z,z';q).
\end{equation}
\end{lemma}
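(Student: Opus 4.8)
The plan is to reduce the left-hand side to a single bilateral sum by a root-of-unity filter, split that sum into residue classes modulo $n$, and then recognise the two pieces on the right via the change-of-parameter Lemma~\ref{switch}. First I would insert the definition (\ref{al}) of $m$ into the left-hand side and interchange the finite sum over $t$ with the bilateral sum over $r$, so that the $t$-sum only touches the factor $(1-q^{r-1}\zeta_n^t x z)^{-1}$. Expanding this factor as a geometric series and using $\sum_{t=0}^{n-1}\zeta_n^{t\ell}=n\,\chi(n\mid \ell)$ yields the orthogonality relation
\[
\sum_{t=0}^{n-1}\frac{\zeta_n^{-kt}}{1-\zeta_n^t w}=\frac{n\,w^{k}}{1-w^{n}}\qquad(0\le k<n),
\]
which, applied with $w=q^{r-1}xz$ and followed by the shift $r\mapsto m+1$, collapses the left-hand side to
\[
\frac{-n\,x^{k}z^{k+1}}{j(z;q)}\sum_{m\in\mathbb Z}\frac{(-1)^{m}q^{\binom m2+m(k+1)}z^{m}}{1-q^{nm}(xz)^{n}}.
\]

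Next I would write $m=ns+t$ with $0\le t\le n-1$, $s\in\mathbb Z$, and use the identity $\binom{ns+t}{2}=n^{2}\binom s2+s\binom n2+nst+\binom t2$ together with $(-1)^{ns+t}=((-1)^{n})^{s}(-1)^{t}$ to split the $s$-summation off from a $t$-dependent prefactor. A direct computation, using $\sum_{s}(-1)^{s}q^{\binom s2}y^{s}=j(y;q)$ and the base-$q^{n^{2}}$ form of the defining sum for $m$, identifies the inner $s$-sum for each $t$ as $j(w_t;q^{n^2})\,m(v,q^{n^2},w_t)$, where $v:=-q^{\binom n2-nk}(-x)^{n}$ is \emph{independent of} $t$ and $w_t:=-q^{\binom{n+1}{2}+nk+nt}(-z)^{n}$; one checks along the way that $v\,w_t=q^{n^{2}+nt}(xz)^{n}$. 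The appearance of $v$ as the first argument of the Appell--Lerch series on the right-hand side is now visible.

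Finally I would apply Lemma~\ref{switch} in base $q^{n^{2}}$ to write $m(v,q^{n^2},w_t)=m(v,q^{n^2},z')+\Delta(v,w_t,z';q^{n^2})$, thereby separating a $z'$-independent ``main'' term from a $\Delta$-correction. Summing the main terms over $t$, the $t$-prefactors should reassemble---after the transformations (\ref{j1}), (\ref{j2})---into the residue-class splitting of $j(z;q)$ modulo $n$, leaving exactly the scalar $n\,q^{-\binom{k+1}{2}}(-x)^{k}$ in front of $m(v,q^{n^2},z')$; summing the $\Delta$-corrections over $t$ and simplifying each $\Delta(v,w_t,z';q^{n^2})$ with (\ref{j1}) (applied to the $j(q^{n^{2}}\,\cdot\,;q^{n^{2}})$-type factors produced by $v\,w_t=q^{n^{2}+nt}(xz)^{n}$) matches the defining sum (\ref{Psikndef}) term by term and produces $n\,\Psi_k^{n}(x,z,z';q)$. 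The main obstacle is precisely this last bookkeeping: verifying that the $t$-prefactors of the main terms collapse to the clean constant $n\,q^{-\binom{k+1}{2}}(-x)^{k}$ (equivalently, recognising the theta splitting of $j(z;q)$) and that the $\Delta$-sum is literally the right-hand side of (\ref{Psikndef}) rather than merely proportional to it; the preceding reductions are routine manipulations of geometric series and quotients of theta functions.
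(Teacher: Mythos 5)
The paper offers no proof of this lemma to compare against: it is quoted directly from Hickerson and Mortenson (\cite[Theorem~3.9]{hm1}). Your argument is a correct, self-contained derivation of that result, and the two bookkeeping points you flag as the main obstacles do go through. After the root-of-unity filter (note the identity $\sum_{t=0}^{n-1}\zeta_n^{-kt}/(1-\zeta_n^t w)=nw^k/(1-w^n)$ is one of rational functions in $w$, so the geometric-series derivation extends to all $r$, not just those with $|q^{r-1}xz|<1$) and the split $m=ns+t$, the inner $s$-sum is indeed $j(w_t;q^{n^2})\,m(v,q^{n^2},w_t)$ with $v=-q^{\binom{n}{2}-nk}(-x)^n$ and $w_t=-q^{\binom{n+1}{2}+nk+nt}(-z)^n$. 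For the main term, the prefactor sum $\sum_{t=0}^{n-1}q^{\binom{t+1}{2}+kt}(-z)^t j(w_t;q^{n^2})$ is exactly the right-hand side of \eqref{MH12f} with $z$ replaced by $q^{k+1}z$, hence equals $j(q^{k+1}z;q)=(-1)^{k+1}q^{-\binom{k+1}{2}}z^{-k-1}j(z;q)$ by \eqref{j1}, and after cancelling $j(z;q)$ against the outer factor $-nx^kz^{k+1}/j(z;q)$ one gets the clean constant $nq^{-\binom{k+1}{2}}(-x)^k$ in front of $m(v,q^{n^2},z')$. For the correction term, using $vw_t=q^{n^2+nt}(xz)^n$, $vz'w_t=q^{n^2}\cdot q^{nt}(xz)^nz'$ and \eqref{j1} in base $q^{n^2}$, the product $j(w_t;q^{n^2})\Delta(v,w_t,z';q^{n^2})$ collapses to
\begin{equation*}
\frac{J_{n^2}^3\, j(w_t/z',\,q^{nt}(xz)^nz';q^{n^2})}{j(z';q^{n^2})\, j(vz',\,q^{nt}(xz)^n;q^{n^2})},
\end{equation*}
and summing over $t$ against the weights $q^{\binom{t+1}{2}+kt}(-z)^t$ with the same outer factor reproduces \eqref{Psikndef} term by term, giving $n\Psi_k^n(x,z,z';q)$ exactly, not merely up to a constant. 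So your route is valid; it amounts to reproving the Hickerson--Mortenson orthogonality lemma from the definition \eqref{al} together with Lemma~\ref{switch}, \eqref{MH12f} and \eqref{j1}, which is more work than the citation the paper uses but would make Section~2 self-contained.
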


We also require \cite[Eqs. (3.2b), (3.2e) and (3.3)]{hm1} 
\begin{equation} \label{flip1}
m(x,q,z) = x^{-1} m(x^{-1}, q, z^{-1}),
\end{equation}
\begin{equation} \label{flip2}
m(x,q,z) =  x^{-1} - x^{-1} m(qx,q,z),
\end{equation}
\begin{equation} \label{eval}
m(q,q^2,-1) = \frac{1}{2}
\end{equation}
and \cite[Eq. (4.7)]{hm1}
\begin{equation} \label{htom}
\frac{1}{j(q;q^2)} \sum_{n \in \mathbb{Z}} \frac{(-1)^nq^{n^2+n}}{1-xq^n} = -x^{-1}m(x^{-2}q,q^2,x). 
\end{equation}
In addition, we often use the following well-known fact. Let $s \in \mathbb{Z}$. Then
\begin{equation} \label{sim}
\sum_{j=0}^{n-1} \zeta_n^{sj}= \begin{cases}
 n & \text{if $s \equiv 0 \pmod{n}$}, \\
 0 & \text{otherwise.}
\end{cases}
\end{equation}
To explicitly compute the generalized rank deviations (\ref{dev}), we use the following formula. As the proof is similar to that of \cite[Proposition 2.3]{loover}, we omit it. Let
\begin{equation} \label{sddef}
\overline{S}_{d}(z;q) := (1+z) \mathcal{O}_{d}(z;q).
\end{equation}

\begin{proposition} We have
\begin{equation} \label{overkey}
\frac{1}{M} \sum_{j=1}^{M-1} \zeta_{M}^{-aj} \overline{S}_{d}(\zeta_M^{j}; q) = \overline{D}_{d}(a,M) + \overline{D}_{d}(a-1,M).
\end{equation}
\end{proposition}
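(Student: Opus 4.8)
The plan is to perform a standard roots-of-unity extraction on the $q$-expansion in \eqref{gen}. We work throughout at the level of formal power series in $q$ whose coefficients are Laurent polynomials in $z$: for each fixed $n$, only finitely many $\overline{N}_d(m,n)$ are nonzero, and since $|\zeta_M^j| = 1$ while $|q| < 1$, specializing $z = \zeta_M^j$ for $1 \leq j \leq M-1$ causes no poles in \eqref{gen}, so the substitution is legitimate coefficientwise. Multiplying the expansion $\mathcal{O}_d(z;q) = \sum_{m\in\mathbb{Z},\,n\geq 0}\overline{N}_d(m,n)z^m q^n$ by $1+z$ as in \eqref{sddef} and reindexing gives $\overline{S}_d(z;q) = \sum_{m,n}\bigl(\overline{N}_d(m,n) + \overline{N}_d(m-1,n)\bigr)z^m q^n$; thus the factor $1+z$ is exactly the device that will convert a single coefficient extraction into the pair $\overline{D}_d(a,M) + \overline{D}_d(a-1,M)$.

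Next I would compute $\tfrac{1}{M}\sum_{j=1}^{M-1}\zeta_M^{-aj}\,\overline{S}_d(\zeta_M^j;q)$ by interchanging the (finite, for each power of $q$) summations and invoking \eqref{sim} in the truncated form $\tfrac{1}{M}\sum_{j=1}^{M-1}\zeta_M^{(m-a)j} = \chi(m \equiv a \!\!\pmod M) - \tfrac{1}{M}$. This splits the result into two pieces. The ``$\chi$'' piece collapses, via \eqref{Nd}, to $\sum_{n\geq 0}\bigl(\overline{N}_d(a,M,n) + \overline{N}_d(a-1,M,n)\bigr)q^n$, where the shift $m \mapsto m-1$ in the second summand of $\overline{S}_d$ produces the $\overline{N}_d(a-1,M,n)$ term. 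The ``$-1/M$'' piece equals $-\tfrac{1}{M}\sum_{n\geq 0}\bigl(\sum_{m\in\mathbb{Z}}\overline{N}_d(m,n) + \sum_{m\in\mathbb{Z}}\overline{N}_d(m-1,n)\bigr)q^n$.

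To finish, I would evaluate the inner sum $\sum_{m\in\mathbb{Z}}\overline{N}_d(m,n)$. Setting $z = 1$ in \eqref{gen} annihilates the entire series $\sum_{n\geq 1}(\cdots)$ because of the factor $(1-z)(1-z^{-1})$, leaving $\mathcal{O}_d(1;q) = (-q)_{\infty}/(q)_{\infty} = \sum_{n\geq 0}\overline{p}(n)q^n$; comparing coefficients gives $\sum_{m\in\mathbb{Z}}\overline{N}_d(m,n) = \overline{p}(n)$, and the reindexed sum $\sum_{m\in\mathbb{Z}}\overline{N}_d(m-1,n)$ equals the same thing. Hence the ``$-1/M$'' piece is $-\tfrac{2}{M}\sum_{n\geq 0}\overline{p}(n)q^n$, and regrouping the ``$\chi$'' and ``$-1/M$'' pieces according to the definition \eqref{dev} yields precisely $\overline{D}_d(a,M) + \overline{D}_d(a-1,M)$. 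There is no serious obstacle: the proof is a bookkeeping exercise, and the only points meriting a word of care are the legitimacy of the specialization $z = \zeta_M^j$ (handled formally in $q$), the evaluation $\mathcal{O}_d(1;q) = \sum_n\overline{p}(n)q^n$, and the observation that the $1+z$ factor together with the truncation of the geometric sum to $1 \leq j \leq M-1$ are jointly responsible for both the pairing of deviations and the $-\overline{p}(n)/M$ normalization built into \eqref{dev}.
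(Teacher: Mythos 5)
Your argument is correct and is exactly the standard roots-of-unity extraction that the paper has in mind: the paper omits the proof precisely because it is the same routine computation as in \cite[Proposition 2.3]{loover}, namely using the factor $1+z$ to pair the shifts $m$ and $m-1$, the identity $\frac{1}{M}\sum_{j=1}^{M-1}\zeta_M^{(m-a)j}=\chi(m\equiv a\ (\mathrm{mod}\ M))-\frac{1}{M}$, and $\mathcal{O}_d(1;q)=\sum_{n\geq 0}\overline{p}(n)q^n$ to produce the $-\overline{p}(n)/M$ normalization. No gaps; your bookkeeping (finiteness of nonzero $\overline{N}_d(m,n)$ for fixed $n$ and the harmlessness of specializing $z=\zeta_M^j$) covers the only points needing care.
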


We now need to express $\overline{S}_{d}(z;q)$ in terms of Appell--Lerch series and sums of theta quotients. Recall~(\ref{genlam}).

\begin{proposition} \label{key} Let $z$, $z_0$ and $z' \in \mathbb{C}^{*}$ be generic. For $d$ odd, we have
\begin{equation} \label{sdodd}
\overline{S}_{d}(z;q) = (1-z) \left( 1 - 2m(z^{-2} q^{d^2}, q^{2d^2}, z') + 2 \Lambda(d, z, z_0, z') \right).
\end{equation}
For $d$ even, we have
\begin{equation} \label{sdeven}
\overline{S}_{d}(z;q) = (1-z) \left( -1 + 2m((-1)^{\frac{d}{2} + 1} z q^{\frac{d^2}{4}}, q^{\frac{d^2}{2}}, z') + 2(-1)^{\frac{d}{2}} z q^{-\frac{d^2}{4}} \Psi_{0}^{\frac{d}{2}}(z^{\frac{2}{d}} q^{1-d}, q, z'; q^2) \right).
\end{equation}
\end{proposition}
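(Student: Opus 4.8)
The plan is to extract the Appell--Lerch series directly from the product--sum formula \eqref{gen} for $\mathcal{O}_{d}(z;q)$, after first turning the one-sided sum there into a bilateral sum.

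\textbf{Step 1 (symmetrize and clean up).} Writing $f(n):=\frac{(-1)^{n}q^{n^{2}+dn}}{(1-zq^{dn})(1-z^{-1}q^{dn})}$, the identity $(1-zq^{-dn})(1-z^{-1}q^{-dn})=q^{-2dn}(1-zq^{dn})(1-z^{-1}q^{dn})$ gives $f(-n)=f(n)$, and since $f(0)=\frac{1}{(1-z)(1-z^{-1})}$ the bracket in \eqref{gen} equals $(1-z)(1-z^{-1})\sum_{n\in\mathbb{Z}}f(n)$. Now use the partial fraction decomposition $\frac{1}{(1-zu)(1-z^{-1}u)}=\frac{1}{z^{2}-1}\bigl(\frac{z^{2}}{1-zu}-\frac{1}{1-z^{-1}u}\bigr)$ with $u=q^{dn}$, replace $n$ by $-n$ in the contribution of $\frac{1}{1-z^{-1}u}$, and simplify with $\frac{q^{dn}}{1-zq^{dn}}=\frac1z\bigl(\frac1{1-zq^{dn}}-1\bigr)$ and $\sum_{n\in\mathbb{Z}}(-1)^{n}q^{n^{2}}=j(q;q^{2})$. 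After the elementary simplification $(1+z)(1-z)(1-z^{-1})\tfrac{z}{z^{2}-1}=1-z$ and the product identity $\frac{(-q)_{\infty}}{(q)_{\infty}}\,j(q;q^{2})=1$ (which follows from $(-q;q)_\infty(q;q)_\infty=(q^{2};q^{2})_\infty$, the even/odd splitting $(q;q)_\infty=(q;q^{2})_\infty(q^{2};q^{2})_\infty$, and \eqref{j}), one arrives at
\[
\overline{S}_{d}(z;q)=(1-z)\Bigl(\tfrac{2}{j(q;q^{2})}\sum_{n\in\mathbb{Z}}\tfrac{(-1)^{n}q^{n^{2}}}{1-zq^{dn}}-1\Bigr),
\]
so that $\overline{S}_{d}(z;q)$ is, as it must be, $(1-z)$ times a single series.

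\textbf{Step 2 (root-of-unity splitting and \eqref{htom}).} Factor $1-zq^{dn}=\prod_{t=0}^{d-1}\bigl(1-\zeta_{d}^{t}z^{1/d}q^{n}\bigr)$ and use $\frac{1}{1-y^{d}}=\frac1d\sum_{t=0}^{d-1}\frac1{1-\zeta_{d}^{t}y}$ with $y=z^{1/d}q^{n}$ to split the series into $d$ pieces of the form $\sum_{n}\frac{(-1)^{n}q^{n^{2}}}{1-xq^{n}}$. Replacing $n$ by $-n$ turns each of these into $-x^{-1}\sum_{n}\frac{(-1)^{n}q^{n^{2}+n}}{1-x^{-1}q^{n}}$, to which \eqref{htom} applies and yields $\sum_{n}\frac{(-1)^{n}q^{n^{2}}}{1-xq^{n}}=j(q;q^{2})\,m(x^{2}q,q^{2},x^{-1})$. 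Combining with Step 1 gives
\[
\overline{S}_{d}(z;q)=(1-z)\Bigl(\tfrac2d\sum_{t=0}^{d-1}m\bigl(\zeta_{d}^{2t}z^{2/d}q,\,q^{2},\,\zeta_{d}^{-t}z^{-1/d}\bigr)-1\Bigr).
\]

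\textbf{Step 3 (collapsing the $\zeta_{d}$-sum).} It remains to rewrite this finite sum of Appell--Lerch series. For $d$ odd: since $\gcd(2,d)=1$ one may reindex so that each summand has argument $\zeta_{d}^{t}(\text{fixed})$, and \eqref{flip1}, \eqref{flip2} (together with the periodicity $m(x,q,qz)=m(x,q,z)$, a consequence of \eqref{j1}) normalize the first slot to the power $q^{d}$ of $q$, the finitely many monomial corrections produced by iterating \eqref{flip2} recombining on summation over $t$ into the scalar term. Then Lemma \ref{switch} moves the still $t$-dependent third slot onto the generic $z_{0}$, producing exactly the $\Delta$-sum of \eqref{genlam}, and the orthogonality relation Lemma \ref{orthog} with $n=d$ and $k=\tfrac{d-1}{2}$ collapses what remains into a single $m(z^{-2}q^{d^{2}},q^{2d^{2}},z')$ together with $\Psi^{d}_{(d-1)/2}(z^{-2/d}q^{d},z_{0},z';q^{2})$; collecting the prefactors $(-1)^{(d+1)/2}q^{-(d-1)^{2}/4}z^{(d-1)/d}$ gives \eqref{sdodd}. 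For $d$ even one proceeds analogously, now pairing the $d$ Appell--Lerch series as $t\leftrightarrow t+\tfrac d2$ into $\tfrac d2$ terms whose third slot emerges as the specific value $q$ (so Lemma \ref{switch} is not invoked), and applying Lemma \ref{orthog} with $n=\tfrac d2$ and $k=0$ to produce $m\bigl((-1)^{d/2+1}zq^{d^{2}/4},q^{d^{2}/2},z'\bigr)$ and $\Psi^{d/2}_{0}(z^{2/d}q^{1-d},q,z';q^{2})$, which is \eqref{sdeven}. (As a sanity check, for $d=1$ one has $m(z^{2}q,q^{2},z^{-1})=1-m(z^{-2}q,q^{2},z)$ by \eqref{flip1}--\eqref{flip2} and $\Psi^{1}_{0}(x,z_{0},z';q^{2})=\Delta(x,z_{0},z';q^{2})$ by Lemma \ref{orthog}, so \eqref{sdodd} reduces to one application of Lemma \ref{switch}.)

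The main obstacle is the bookkeeping in Step 3: one must carry the powers of $q$, of $z$, and of $\zeta_{d}$ through the reindexing and through the two applications of Lemmas \ref{switch} and \ref{orthog}, and verify that the auxiliary parameters emerging there match, on the nose, those built into \eqref{delta}, \eqref{Psikndef} and \eqref{genlam}; in particular one must confirm that the monomial remainders from iterating \eqref{flip2} sum to exactly the constant $1$. There is a genuine case split by the parity of $d$, and within the even case a further split by the parity of $\tfrac d2$, which governs the sign $(-1)^{(d/2)m}$ appearing when one writes $n=\tfrac d2 m+t$. The genericity hypotheses on $z$, $z_{0}$, $z'$ guarantee that none of the theta quotients or Appell--Lerch series encountered along the way acquires a pole.
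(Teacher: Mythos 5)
Your plan is, at bottom, the paper's own proof with the summation shift postponed: Step 1 is equivalent to the paper's intermediate identity \eqref{gen1}, Step 2 is the dissection \eqref{expand} followed by \eqref{htom}, and Step 3 is Lemma \ref{switch} (odd $d$) plus Lemma \ref{orthog}. The paper instead shifts $n\to n-\frac{d-1}{2}$ (resp.\ $n\to n-\frac d2$) \emph{before} dissecting, so every $m$ appears with exactly the first and third slots built into \eqref{genlam} and \eqref{sdeven}, and no monomial remainders ever arise; you postpone the shift and pay in Step 3. Your Steps 1--2 are correct, and your claim that the \eqref{flip2} remainders resum (via \eqref{sim}) to the scalar term does check out.

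However, two points in Step 3 are asserted with justifications that do not suffice, and they are the crux rather than bookkeeping. (a) Odd $d$: after \eqref{flip1} and the \eqref{flip2} iterations your third slot is $\zeta_d^tz^{1/d}$, whereas \eqref{genlam} demands $\zeta_d^tz^{1/d}q^{-(d-1)/2}$. The periodicity you invoke is, in base $q^2$, $m(x,q^2,q^2z)=m(x,q^2,z)$, which only moves the third slot by \emph{even} powers of $q$, so it does not close the case $d\equiv 3\pmod 4$ (in particular $d=3$, the case used in Section 5). The repair is short but must be stated: the discrepancy is $\Delta(\zeta_d^{-2t}z^{-2/d}q^d,\,\zeta_d^tz^{1/d},\,\zeta_d^tz^{1/d}q^{-(d-1)/2};q^2)$, whose numerator contains $j(q^{(d-1)/2};q^2)\,j(q^{(d+1)/2};q^2)$; since one of these consecutive exponents is even, it vanishes by \eqref{jvan}, so your $\Delta$-sum does coincide with that of \eqref{genlam}. (b) Even $d$: ``the third slot emerges as the specific value $q$'' is \emph{not} an identity of Appell--Lerch series for independent slots --- $m(x,q^2,w)+m(x,q^2,-w)\neq 2m(x,q^2,q)$ in general. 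It is true here only because your parameters satisfy $xw^2=q$, which is seen most cleanly by recombining the two bilateral series, $\tfrac{1}{1-\zeta_d^tz^{1/d}q^n}+\tfrac{1}{1+\zeta_d^tz^{1/d}q^n}=\tfrac{2}{1-\zeta_{d/2}^tz^{2/d}q^{2n}}$, \emph{before} invoking \eqref{htom}; equivalently, apply \eqref{expand} with $d/2$ rather than $d$ roots of unity, which is exactly what the paper does. You must also lower the first slot to $z^{2/d}q^{1-d}$ before applying Lemma \ref{orthog} (otherwise $\Psi_0^{d/2}$ comes out with the wrong first argument) and apply \eqref{flip2} once more afterwards to reach $q^{d^2/4}$ and the constant $-1$ in \eqref{sdeven}. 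With these points supplied your argument goes through; performing the index shift at the outset, as in the paper, avoids them entirely.
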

\begin{proof} 
By (\ref{gen}), the fact that
$$
\frac{1}{(1-zq^{dn})(1-z^{-1} q^{dn})} = \frac{-z^2}{(1-z^2)(1-zq^{dn})} + \frac{1}{(1-z^2)(1-z^{-1} q^{dn})}
$$
and some simplifications, we have
\begin{equation} \label{gen1}
\mathcal{O}_{d}(z;q) = \frac{1-z}{1+z} \left( 1 + \frac{2z}{j(q;q^2)} \sum_{n \in \mathbb{Z}} \frac{(-1)^n q^{n^2 + dn}}{1-zq^{dn}} \right).
\end{equation}

For $d$ odd, we let $n \to n - \frac{d-1}{2}$ in (\ref{gen1}) to obtain
\begin{equation} \label{step1}
\mathcal{O}_{d}(z;q) = \frac{1-z}{1+z} \left( 1 + \frac{2z (-1)^{\frac{d-1}{2}} q^{-\frac{(d-1)^2}{4} - \frac{d-1}{2}}}{j(q;q^2)} \sum_{n \in \mathbb{Z}} \frac{(-1)^n q^{n^2 + n}}{1-zq^{dn - \frac{d(d-1)}{2}}} \right).
\end{equation}
An application of
\begin{equation} \label{expand}
\frac{1}{1-x^d} = \frac{1}{d} \sum_{t=0}^{d-1} \frac{1}{1-\zeta_d^{t} x}
\end{equation}
to (\ref{step1}) yields
\begin{equation} \label{step2}
\mathcal{O}_{d}(z;q) = \frac{1-z}{1+z} \left( 1 + \frac{2z (-1)^{\frac{d-1}{2}} q^{-\frac{(d-1)^2}{4} - \frac{d-1}{2}}}{d j(q;q^2)} \sum_{t=0}^{d-1} \sum_{n \in \mathbb{Z}} \frac{(-1)^n q^{n^2 + n}}{1 - \zeta_d^t z^{\frac{1}{d}} q^{-\frac{d-1}{2} + n}} \right).
\end{equation}
Taking $x=\zeta_d^{t} z^{\frac{1}{d}} q^{-\frac{d-1}{2}}$ in (\ref{htom}) turns (\ref{step2}) into 
\begin{equation} \label{step2.5}
\mathcal{O}_{d}(z;q) = \frac{1-z}{1+z} \left(1 - \frac{2z (-1)^{\frac{d-1}{2}} q^{-\frac{(d-1)^2}{4}} z^{-\frac{1}{d}}}{d} \sum_{t=0}^{d-1} \zeta_d^{-t} m(\zeta_{d}^{-2t} z^{-\frac{2}{d}} q^d, q^2, \zeta_d^t z^{\frac{1}{d}} q^{-\frac{d-1}{2}}) \right).
\end{equation}
We now let $x=\zeta_d^{-2t} z^{-\frac{2}{d}} q^d$, $q \to q^2$ and $z_1 = \zeta_d^t z^{\frac{1}{d}} q^{-\frac{d-1}{2}}$ in (\ref{switch}) to express (\ref{step2.5}) as
\begin{equation} \label{step3}
\begin{aligned}
\mathcal{O}_{d}(z;q) = \frac{1-z}{1+z} \Biggl(1 - \frac{2z (-1)^{\frac{d-1}{2}} q^{-\frac{(d-1)^2}{4}} z^{-\frac{1}{d}}}{d} & \Biggl ( \sum_{t=0}^{d-1} \zeta_d^{-t} m(\zeta_d^{-2t} z^{-\frac{2}{d}} q^d, q^2, z_0) \\
& + \sum_{t=0}^{d-1} \zeta_{d}^{-t} \Delta(\zeta_d^{-2t} z^{-\frac{2}{d}} q^d, \zeta_d^{t} z^{\frac{1}{d}} q^{-\frac{d-1}{2}}, z_0; q^2) \Biggr ) \Biggr).
\end{aligned}
\end{equation}
Finally, we apply (\ref{orthog}) with $n=d$, $k=\frac{d-1}{2}$, $q \to q^2$ and $x=z^{-\frac{2}{d}} q^d$ to (\ref{step3}), simplify and appeal to (\ref{genlam}). By (\ref{sddef}), this yields (\ref{sdodd}). 

For $d$ even, we let $n \to n - \frac{d}{2}$ in (\ref{gen1}) to obtain
\begin{equation} \label{oldstep4}
\mathcal{O}_{d}(z;q) = \frac{1-z}{1+z} \left( 1 + \frac{2z (-1)^{\frac{d}{2}} q^{-\frac{d^2}{4}}}{j(q;q^2)} \sum_{n \in \mathbb{Z}} \frac{(-1)^n q^{n^2}}{1-zq^{dn - \frac{d^2}{2}}} \right).
\end{equation}
Using (\ref{expand}), (\ref{oldstep4}) becomes
\begin{equation} \label{step5}
\mathcal{O}_{d}(z;q) = \frac{1-z}{1+z} \left( 1 + \frac{4z (-1)^{\frac{d}{2}} q^{\frac{-d^2}{4}}}{d} \sum_{t=0}^{\frac{d}{2}-1} m(\zeta_{\frac{d}{2}}^t z^{\frac{2}{d}} q^{1-d}, q^2, q) \right).
\end{equation}
Applying (\ref{orthog}) with $n=\frac{d}{2}$, $k=0$, $q \to q^2$ and $x=z^{\frac{2}{d}} q^{1-d}$ to (\ref{step5}) yields
\begin{equation*} \label{step6}
\begin{aligned}
\mathcal{O}_{d}(z;q)  = \frac{1-z}{1+z} & \Biggl( 1 + 2z (-1)^{\frac{d}{2}} q^{-\frac{d^2}{4}} m((-1)^{\frac{d}{2} + 1} z q^{-\frac{d^2}{4}}, q^{\frac{d^2}{2}}, z') \\
& + 2(-1)^{\frac{d}{2}} z q^{-\frac{d^2}{4}} \Psi_{0}^{\frac{d}{2}}(z^{\frac{2}{d}} q^{1-d}, q, z'; q^2) \Biggr). 
\end{aligned}
\end{equation*}
By (\ref{flip2}) and (\ref{sddef}), (\ref{sdeven}) follows.
\end{proof}

\begin{remark}
If we take $d=1$ and $z_0=z'=-1$ in (\ref{sdodd}), then use (\ref{genlam}) combined with (\ref{j2}) and the $z_1=-1$, $z_0=z$ case of (\ref{switch}), we recover \cite[Eq. (3.1)]{loover}. If we take $d=2$ and $z'=q$ in (\ref{sdeven}) and then use (\ref{Psikndef}) and (\ref{jvan}), we obtain \cite[Eq. (3.3)]{loover}.
\end{remark}

\subsection{Identities with $j(z;q)$ and $3$-dissections}

In order to facilitate an application of Theorem \ref{main1} in Section 5, we begin with some identities, the first two of which in the next result follow from (\ref{j}) and (\ref{j2}) while the rest appear in \cite[Eqs. (2.2f), (2.4a), (2.4b), (2.4c) and (2.4f)]{hm1}. Moreover, we frequently use without mention
\begin{gather*}
j(q;q^2) = \frac{J_1^2}{J_2}, \quad j(q;q^3) = J_1, \quad
j(q;q^6) = \frac{J_1 J_6^2}{J_2 J_3},\\
j(-1;q) = 2\frac{J_2^2}{J_1},\quad
j(-q;q^3) = \frac{J_2J_3^2}{J_1J_6}\quad
\text{and} \quad
j(-q;q^6) = \frac{J_2^2J_3J_{12}}{J_1J_4J_6}.
\end{gather*}

\begin{proposition}For generic $x,y,z\in\mathbb C^{*}$, we have
\begin{equation} \label{jnew1}
j(xq^{-1}; q^2) j(x^{-1} q^2; q^2)
=
x^{2} q^{-1} j(x^{-1}; q) \frac{J_2^2}{J_1},
\end{equation}
\begin{equation} \label{jnew2}
\frac{j(-x;q)}{j(-x^2q ; q^2) j(x;q)} = -\frac{j(-x^{-1}; q)}{j(-x^{-2} q; q^2) j(x^{-1}; q)},
\end{equation}
\begin{equation} \label{MH12f}
j(z;q) = \sum_{k=0}^{n-1} (-1)^kq^{k(k-1)/2}z^k j((-1)^{n+1}q^{n(n-1)/2+nk}z^n;q^{n^2}),
\end{equation}
\begin{equation} \label{MH14a}
j(qx^3;q^3) + xj(q^2x^3;q^3)
= J_1\frac{j(x^2;q)}{j(x;q)},
\end{equation}
\begin{equation} \label{MH14b}
j(x;q)j(y,q) = j(-xy;q^2)j(-qx^{-1}y;q^2) -xj(-xyq;q^2)j(-x^{-1}y;q^2),
\end{equation}
\begin{equation} \label{MH14c}
\frac{j(y;q)}{j(-y;q)} -\frac{j(x;q)}{j(-x;q)} = 2x \frac{j(y/x;q^2)j(qxy;q^2)}{j(-x;q)j(-y;q)}
\end{equation}
and
\begin{equation} \label{MH14e}
\frac{j(zx;q)}{j(x;q)}=\frac{J_n^3j(z;q)}{J_1^3j(x^n;q^n)} \sum_{k=0}^{n-1} x^k \frac{j(zx^nq^k;q^n)} {j(zq^k;q^n)}.
\end{equation}
\end{proposition}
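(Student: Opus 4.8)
The plan is simple: of the seven identities in the proposition, the five relations \eqref{MH12f}, \eqref{MH14a}, \eqref{MH14b}, \eqref{MH14c} and \eqref{MH14e} are precisely \cite[Eqs.~(2.2f), (2.4a), (2.4b), (2.4c) and (2.4f)]{hm1}, so for these there is nothing to prove beyond citing the reference; they are invoked only as black boxes in Section~5. It therefore remains to establish \eqref{jnew1} and \eqref{jnew2}, and both follow by elementary manipulation of the product expansion in \eqref{j} together with the symmetry \eqref{j2} (and its trivial consequence \eqref{j1}).

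For \eqref{jnew1} I would proceed in three steps. First, apply \eqref{j2} with $q$ replaced by $q^2$, in the form $j(w;q^2)=j(q^2/w;q^2)$, with $w=x^{-1}q^2$; since $q^2/w=x$, this rewrites the left-hand side as $j(xq^{-1};q^2)\,j\bigl((xq^{-1})q;q^2\bigr)$. Second, splitting each $q$-Pochhammer symbol in \eqref{j} into its even and odd parts via $(a)_\infty=(a;q^2)_\infty(aq;q^2)_\infty$ gives the ``multiplication'' identity $j(z;q^2)\,j(zq;q^2)=j(z;q)\,J_2^2/J_1$ after collecting the residual factor $(q;q^2)_\infty/(q^2;q^2)_\infty=J_1/J_2^2$; applying this with $z=xq^{-1}$ reduces the left-hand side of \eqref{jnew1} to $j(xq^{-1};q)\,J_2^2/J_1$. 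Third, \eqref{j1} with $n=-1$ (which is immediate from \eqref{j}) and then \eqref{j2} yield $j(xq^{-1};q)=-q^{-1}x\,j(x;q)=q^{-1}x^2\,j(x^{-1};q)$, which is exactly the claimed right-hand side. (Alternatively, one can simply expand both sides of \eqref{jnew1} as triple products and match them directly; the multiplication-identity route is just a tidier packaging of the same computation.)

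For \eqref{jnew2} the key observation is that $q^2/(-x^2q)=-x^{-2}q$, so \eqref{j2} at base $q^2$ gives $j(-x^2q;q^2)=j(-x^{-2}q;q^2)$; hence the two denominators differ only in the $j(\,\cdot\,;q)$ factors. Now \eqref{j2} gives $j(-x;q)=x\,j(-x^{-1};q)$ and $j(x;q)=-x\,j(x^{-1};q)$, and substituting these and cancelling the factor $x$ against $-x$ produces the overall sign $-1$, turning the left-hand side of \eqref{jnew2} into the right-hand side.

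I do not expect any genuine obstacle in this proposition. The only care needed is routine bookkeeping of the powers of $q$ and of $x$ introduced when shifting the arguments of $j$, and verifying that the genericity hypothesis on $x$ (and $y$, $z$) keeps every theta quotient well-defined; the substantive input is the list of addition and dissection formulas imported from \cite{hm1}.
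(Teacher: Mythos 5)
Your proposal is correct and matches the paper's treatment: the paper also simply cites \cite[Eqs.~(2.2f), (2.4a), (2.4b), (2.4c) and (2.4f)]{hm1} for \eqref{MH12f}--\eqref{MH14e} and notes that \eqref{jnew1} and \eqref{jnew2} follow from \eqref{j} and \eqref{j2}, which is exactly the elementary product/symmetry argument you carried out (your bookkeeping of the powers of $x$ and $q$ checks out).
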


Next, we state results which can be found in \cite[Eqs. (1.6) and (1.7)]{AHadvances} or deduced from~\eqref{j}, \eqref{MH12f} and~\eqref{MH14a}.

\begin{proposition} If $w$ is a primitive third root of unity and $x \in \mathbb{C}^{*}$ is generic, then
\begin{equation} \label{AHw1}
j(w; q)
= (1-w)J_3,
\end{equation}
\begin{equation} \label{AHw2}
j(-w; q)
=(1+w)\frac{J_1^2 J_6}{J_2 J_3},
\end{equation}
\begin{equation} \label{AHwnew} 
j(-wq;q^2) 
=\frac{J_1J_4J_6^2}{J_2J_3J_{12}},
\end{equation}
\begin{equation} \label{AHwnew2a}
j(-wq;q^3)
= J_9 \biggl(\frac{j(q^2;q^9)}{j(-q;q^9)} -w^2 q \frac{j(q^8;q^9)}{j(-q^4;q^9)} \biggr),
\end{equation}
\begin{equation}\label{AHwnew3a}
j(-wq;q^6)
= J_{18}
\biggl( \frac{j(q^{10};q^{18})}{j(-q^5;q^{18})} +wq\frac{j(q^{14};q^{18})}{j(-q^7;q^{18})} \biggr)
\end{equation}
and
\begin{equation} \label{prodw}
j(x;q) j(x w; q) j(x w^2 ; q)
=\frac{J_1^3}{J_3} j(x^3;q^3).
\end{equation}
\end{proposition}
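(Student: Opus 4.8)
The plan is to prove the six identities one at a time, in each case reducing to the product side of \eqref{j}, the dissection \eqref{MH12f}, and/or the duplication identity \eqref{MH14a}, together with the elementary factorisations $\prod_{\ell=0}^{2}(1-w^{\ell}y)=1-y^{3}$ and $\prod_{\ell=0}^{2}(1+w^{\ell}y)=1+y^{3}$ and the relation $1+w+w^{2}=0$. (These are essentially the manipulations behind \cite[Eqs.~(1.6), (1.7)]{AHadvances}.)

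I would first dispose of \eqref{prodw} and \eqref{AHwnew}, which are pure product identities. For \eqref{prodw}, expand each factor $j(xw^{\ell};q)$ by \eqref{j}, collect the three infinite products over $\ell\in\{0,1,2\}$, and apply $\prod_{\ell}(1-w^{\ell}y)=1-y^{3}$ with $y=xq^{k}$ and (using that $\{w^{-\ell}\}=\{w^{\ell}\}$) with $y=q^{k+1}/x$; this produces $J_1^{3}(x^{3};q^{3})_{\infty}(q^{3}/x^{3};q^{3})_{\infty}$, and one more application of \eqref{j} rewrites the last two factors as $j(x^{3};q^{3})/J_3$. For \eqref{AHwnew}, the $1+y^{3}$ version of the factorisation gives $(-q;q^{2})_{\infty}(-wq;q^{2})_{\infty}(-w^{2}q;q^{2})_{\infty}=(-q^{3};q^{6})_{\infty}$, hence $j(-wq;q^{2})=(q^{2};q^{2})_{\infty}(-q^{3};q^{6})_{\infty}/(-q;q^{2})_{\infty}$, which becomes $J_1J_4J_6^{2}/(J_2J_3J_{12})$ after the standard conversions $(-q;q^{2})_{\infty}=J_2^{2}/(J_1J_4)$ and $(-q^{3};q^{6})_{\infty}=J_6^{2}/(J_3J_{12})$.

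Next, \eqref{AHw1} follows by taking $n=3$, $z=w$ in \eqref{MH12f}: the $k=2$ term vanishes by \eqref{jvan}, and $j(q^{6};q^{9})=j(q^{3};q^{9})$ by \eqref{j2}, leaving $(1-w)j(q^{3};q^{9})=(1-w)J_3$. For \eqref{AHw2} I would instead put $x=-w$ in \eqref{MH14a}: since $j(-q^{2};q^{3})=j(-q;q^{3})$, the left-hand side is $(1-w)j(-q;q^{3})$, while the right-hand side is $J_1\,j(w^{2};q)/j(-w;q)$ with $j(w^{2};q)=-w^{2}j(w;q)=-w^{2}(1-w)J_3$ by \eqref{j2} and \eqref{AHw1}; solving for $j(-w;q)$ and using $j(-q;q^{3})=J_2J_3^{2}/(J_1J_6)$ together with $-w^{2}=1+w$ gives the formula.

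The two identities that take genuine bookkeeping are \eqref{AHwnew2a} and \eqref{AHwnew3a}, and that is where any difficulty lies. For \eqref{AHwnew2a} I would apply \eqref{MH12f} with $n=3$, $q\mapsto q^{3}$, $z=-wq$ to write $j(-wq;q^{3})$ as a sum of three theta functions to the base $q^{27}$ (simplifying $j(-q^{30};q^{27})=q^{-3}j(-q^{3};q^{27})$ via \eqref{j1}--\eqref{j2}); on the other side I would expand each of the two quotients on the right of \eqref{AHwnew2a} by \eqref{MH14a} with $q\mapsto q^{9}$ (taking $x=-q$ and $x=-q^{4}$), again reducing the resulting theta functions to the base $q^{27}$. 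Comparing the two expressions, the only apparent discrepancy is a term $-q(1+w^{2})j(-q^{21};q^{27})$, which collapses to $+wq\,j(-q^{21};q^{27})$ by $1+w+w^{2}=0$, so the sides coincide; the proof of \eqref{AHwnew3a} is the same computation with $3,9,27$ replaced by $6,18,54$, this time using $1+w=-w^{2}$ to merge the two cross terms. There is no conceptual obstacle --- the content is entirely the organisation of root-of-unity simplifications --- but one must track the exponents produced by the dissection and by the \eqref{MH14a} expansions carefully, apply \eqref{j1}--\eqref{j2} consistently to bring every theta function to a common base, and invoke $1+w+w^{2}=0$ at exactly the right step.
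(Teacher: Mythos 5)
Your proposal is correct, and it follows precisely the route the paper itself indicates (the paper only cites \cite[Eqs.~(1.6), (1.7)]{AHadvances} and remarks that the identities are deducible from \eqref{j}, \eqref{MH12f} and \eqref{MH14a}): your derivations of \eqref{prodw} and \eqref{AHwnew} from the product side of \eqref{j}, of \eqref{AHw1} and \eqref{AHw2} from \eqref{MH12f} and \eqref{MH14a}, and of \eqref{AHwnew2a}--\eqref{AHwnew3a} by matching the $n=3$ dissection against two instances of \eqref{MH14a} with $1+w+w^{2}=0$ all check out. No gaps.
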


Before proceeding, recall that given a $q$-series $F(q)$ with integral powers, its $N$-dissection is a representation of the form
\begin{equation*}
F(q) = \sum_{k=0}^{N-1} q^k F_{k}(q^k)
\end{equation*}
where $F_k(q)$ is a series in $q$ with integral powers. The study of $N$-dissections ostensibly began in Ramanujan's lost notebook \cite{lost} and has now become its own extensive literature. As it is not possible to give a comprehensive overview of this area here, we only mention the classical result of Atkin and Swinnerton-Dyer on $5$ and $7$ dissections of the rank of partitions \cite{asd} and the recent impressive work \cite{boro}
wherein the $11$-dissection of the deviations of the rank and crank modulo $11$ are obtained. Subsequent applications of \cite{boro} include new proofs of crank equalities \cite{garvan}, crank--crank inequalities \cite{bg, ekin}, congruences for $p(n)$ modulo $11$ \cite{asd} and linear rank congruences \cite{ah} as well as new results on rank and rank--crank inequalities, new congruences for rank and crank moments and for Andrews' smallest parts function. The final collection of results concerns $3$-dissections of certain eta quotients. The following two identities are \cite[Eq. (3.9)]{bo} and the $a=-q$, $b=-q^5$, $n=3$ case of \cite[Entry 31, page 48]{berndt}, respectively.

\begin{proposition} We have
\begin{equation}  \label{3dis1}
\frac1{J_1^3} = W_0(q^3) + qW_1(q^3) + q^2W_2(q^3)
\end{equation}
where
\begin{equation*}
W_0(q) := 
\frac{J_3^9}{J_1^{12}} \biggl(\frac{1}{w^2(q)} +8 q w(q) +16 q^2 w^4(q) \biggr),
\end{equation*}
\begin{equation*}
W_1(q) :=
\frac{J_3^9}{J_1^{12}}\biggl( \frac{3}{w(q)} +12 q w^2(q) \biggr),
\qquad W_2(q) :=
9\frac{J_3^9}{J_1^{12}}
\end{equation*}
and
\begin{equation*}
w(q) :=
\frac{J_1J_6^3}{J_2J_3^3}.
\end{equation*}
\end{proposition}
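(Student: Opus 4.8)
\emph{Proof proposal.} Equation~\eqref{3dis1} is \cite[Eq.~(3.9)]{bo}; I outline the proof I would give. The idea is to route everything through the Borwein cubic theta functions
\[
a(q)=\sum_{m,n\in\mathbb Z}q^{m^{2}+mn+n^{2}},\qquad b(q)=\frac{J_1^{3}}{J_3},\qquad c(q)=3q^{1/3}\,\frac{J_3^{3}}{J_1},
\]
and to set $C(q):=3J_3^{3}/J_1=q^{-1/3}c(q)$, so that $1/J_1^{3}=C(q)^{3}/(27J_3^{9})$ holds trivially. The whole problem is thereby reduced to writing down the $3$-dissection of $C(q)^{3}$.

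For this I would invoke three classical facts: $a(q)=a(q^{3})+2c(q^{3})$ and $b(q)=a(q^{3})-c(q^{3})$ (so that $a$ and $b$ are already exhibited in $3$-dissected form), and the cubic analogue of Jacobi's identity $a(q)^{3}=b(q)^{3}+c(q)^{3}$, all due to the Borweins (see also Berndt's account). Cubing the first two relations and subtracting gives
\[
c(q)^{3}=a(q)^{3}-b(q)^{3}=9\,c(q^{3})\bigl(a(q^{3})^{2}+a(q^{3})c(q^{3})+c(q^{3})^{2}\bigr),
\]
and, using $c(q^{3})=qC(q^{3})$ and $c(q)^{3}=qC(q)^{3}$, this becomes the genuine $3$-dissection
\[
C(q)^{3}=9\,C(q^{3})\bigl(a(q^{3})^{2}+q\,a(q^{3})C(q^{3})+q^{2}C(q^{3})^{2}\bigr).
\]
Dividing by $27J_3^{9}$ and collecting the parts of degree $0$, $1$, $2$ modulo $3$ yields
\[
W_0(q)=\frac{J_3^{3}\,a(q)^{2}}{J_1^{10}},\qquad W_1(q)=\frac{3J_3^{6}\,a(q)}{J_1^{11}},\qquad W_2(q)=\frac{9J_3^{9}}{J_1^{12}},
\]
and $W_2$ is already in the stated form. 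Inserting the two-term representation $a(q)=\tfrac{J_3^{3}}{J_1}\bigl(\tfrac{1}{w(q)}+4q\,w(q)^{2}\bigr)$ with $w(q)=J_1J_6^{3}/(J_2J_3^{3})$ then turns $W_1$ into $\tfrac{J_3^{9}}{J_1^{12}}\bigl(\tfrac{3}{w}+12qw^{2}\bigr)$ and $W_0$ into $\tfrac{J_3^{9}}{J_1^{12}}\bigl(\tfrac1w+4qw^{2}\bigr)^{2}=\tfrac{J_3^{9}}{J_1^{12}}\bigl(\tfrac1{w^{2}}+8qw+16q^{2}w^{4}\bigr)$, which is exactly \eqref{3dis1}.

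The crux is the last input, namely the eta-quotient representation of $a(q)$ in the variables $J_1,J_2,J_3,J_6$ and $w(q)$. What comes out most directly from the cubic theory is $a(q)=(J_1^{3}+9qJ_9^{3})/J_3$ (immediate from $a(q)=b(q)+3c(q^{3})$), which involves $J_9$ and does not collapse to the $w$-form without extra work; reconciling the two representations requires a theta-quotient manipulation of the kind codified in \eqref{MH14a}--\eqref{MH14c}, and that---rather than the dissection bookkeeping---is where the real effort lies. (A fully self-contained proof would also have to reestablish $a(q)=a(q^{3})+2c(q^{3})$, $b(q)=a(q^{3})-c(q^{3})$ and $a^{3}=b^{3}+c^{3}$.) At every stage the identities can be checked against the leading coefficients of $1/J_1^{3}=\prod_{k\ge 1}(1-q^{k})^{-3}$.
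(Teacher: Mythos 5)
Your argument is correct, but it takes a genuinely different route from the paper: the paper offers no proof at all for \eqref{3dis1}, simply quoting it as Eq.~(3.9) of \cite{bo}, whereas you actually derive it from the Borwein cubic theta functions. Your dissection bookkeeping is sound: with $a=a(q^3)+2c(q^3)$, $b=a(q^3)-c(q^3)$ and $a^3=b^3+c^3$ one indeed gets $c(q)^3=9c(q^3)\bigl(a(q^3)^2+a(q^3)c(q^3)+c(q^3)^2\bigr)$, and dividing $1/J_1^3=C(q)^3/(27J_3^9)$ through correctly produces $W_0=J_3^3a^2/J_1^{10}$, $W_1=3J_3^6a/J_1^{11}$, $W_2=9J_3^9/J_1^{12}$. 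The one outstanding debt, which you flag honestly, is the representation $a(q)=\tfrac{J_3^3}{J_1}\bigl(\tfrac1{w(q)}+4q\,w(q)^2\bigr)$, equivalently $a(q)=\dfrac{J_2J_3^6}{J_1^2J_6^3}+4q\,\dfrac{J_1J_6^6}{J_2^2J_3^3}$, i.e.\ $a(q)=\varphi(-q^3)^3/\varphi(-q)+4q\,\psi(q^3)^3/\psi(q)$; this is a classical identity (it can be found in the cubic-theta-function literature, e.g.\ in \cite{berndt} and in the Borwein--Borwein--Garvan circle of results), and if one prefers a self-contained verification it succumbs to exactly the kind of holomorphic-modular-form/coefficient-check argument the paper uses to prove \eqref{3dis3}, so this is a citation to be supplied rather than a gap. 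What the two approaches buy: the paper's citation is the shortest possible path, while your derivation explains conceptually where the modulus-$3$ structure and the function $w(q)$ (essentially Ramanujan's cubic continued fraction, the setting of \cite{bo}) come from, and it would make the proposition self-contained up to standard facts about $a$, $b$, $c$.
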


\begin{proposition} We have
\begin{equation}  \label{3dis2}
\frac{J_1J_6}{J_2J_3^2}
= f_0(q^3) + qf_1(q^3) + q^2f_2(q^3)
\end{equation}
where
\begin{equation*}
f_0(q) :=
\frac{j(q^7;q^{18})}{J_1J_2},
\qquad f_1(q) :=
-\frac{j(q^5;q^{18})}{J_1J_2} \qquad \text{and} \qquad f_2(q) := 
-q\frac{j(q;q^{18})}{J_1J_2}.
\end{equation*}
\end{proposition}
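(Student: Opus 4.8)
The plan is to reduce the claimed $3$-dissection to the $3$-dissection of a single theta function. First I would rewrite the left-hand side as a theta quotient: using the evaluation $j(q;q^6) = J_1J_6^2/(J_2J_3)$ listed above,
\[
\frac{J_1J_6}{J_2J_3^2} = \frac{j(q;q^6)}{J_3J_6},
\]
so it suffices to $3$-dissect $j(q;q^6)$ and then divide through by $J_3J_6$, which is itself a power series in $q^3$ and hence leaves the residue classes modulo $3$ unchanged.

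Next I would apply the $n$-dissection $(\ref{MH12f})$ of $j(z;q)$ with $z=q$, base $q^6$ and $n=3$ — equivalently, the $a=-q$, $b=-q^5$, $n=3$ instance of Berndt's Entry~31, since $j(q;q^6)$ is Ramanujan's $f(-q,-q^5)$. This produces three terms of the form $(-1)^k q^{3k^2-2k}\,j(q^{21+18k};q^{54})$ for $k=0,1,2$. For $k=2$ the first argument $q^{57}$ exceeds the base $q^{54}$, so I would normalize it via $(\ref{j1})$, namely $j(q^{54}\cdot q^3;q^{54}) = -q^{-3}j(q^3;q^{54})$, and use the symmetry $(\ref{j2})$ to rewrite $j(q^{39};q^{54})$ as $j(q^{15};q^{54})$. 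The upshot is
\[
j(q;q^6) = j(q^{21};q^{54}) - q\,j(q^{15};q^{54}) - q^5\,j(q^3;q^{54}).
\]

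Finally I would divide by $J_3J_6$ and read off the pieces. Since $21\equiv 0$, $1\equiv 1$ and $5\equiv 2 \pmod 3$, and each of $j(q^{21};q^{54})$, $j(q^{15};q^{54})$, $j(q^3;q^{54})$, $J_3$, $J_6$ is a power series in $q^3$, the three summands lie in distinct residue classes modulo $3$; identifying the coefficient series and comparing with the definitions of $f_0,f_1,f_2$ gives $j(q^{21};q^{54})/(J_3J_6) = f_0(q^3)$, $-q\,j(q^{15};q^{54})/(J_3J_6) = qf_1(q^3)$ and $-q^5 j(q^3;q^{54})/(J_3J_6) = q^2 f_2(q^3)$, which is exactly $(\ref{3dis2})$. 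There is no genuine analytic obstacle here; the only point requiring care is the sign and power-of-$q$ bookkeeping — normalizing $j(q^{57};q^{54})$ and matching the prefactors $1,-q,-q^5$ against $f_0(q^3), qf_1(q^3), q^2f_2(q^3)$ — so the task is purely one of tracking exponents accurately.
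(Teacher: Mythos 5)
Your proposal is correct and follows essentially the paper's own route: the paper justifies \eqref{3dis2} precisely as the $a=-q$, $b=-q^5$, $n=3$ case of Berndt's Entry~31, which is the same theta dissection you carry out via \eqref{MH12f} applied to $j(q;q^6)=J_1J_6^2/(J_2J_3)$, followed by division by $J_3J_6$. Your exponent and sign bookkeeping (in particular $q^{8}j(q^{57};q^{54})=-q^{5}j(q^{3};q^{54})$ via \eqref{j1}, and $j(q^{39};q^{54})=j(q^{15};q^{54})$ via \eqref{j2}) checks out and matches $f_0,f_1,f_2$ exactly.
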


The following 3-dissections appear to be new. As the proofs are comparable, we only give details once. 

\begin{proposition} We have
\begin{equation} \label{3dis3}
\frac{J_2^4J_8}{J_1J_4^3}
= g_0(q^3)+qg_1(q^3)+q^2g_2(q^3)
\end{equation}
where
\begin{equation*}
g_0(q) := {\frac{J_{1}\*J_{2}^{2}\*\*J_{8}^{2}\*J_{12}^{2}}{{J_{4}^{5}J_{24}}}},
\qquad g_1(q) := {\frac{J_{2}^{7}\*J_{3}\*J_{8}^{2}\*J_{12}^{4}}{J_{1}^{2}\*J_{4}^{7}\*J_{6}^{3}\*J_{24}}} \qquad \text{and} 
\qquad g_2(q) = {-2
\frac{J_2^2\*J_6^2\*J_8^3}
{J_3\*J_4^{5}}}.
\end{equation*}
\end{proposition}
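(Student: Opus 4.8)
The plan is to realize the eta quotient $J_2^4J_8/(J_1J_4^3)$ as a theta quotient of the form $j(\pm q^a;q^b)$ (up to an overall eta factor) and then apply the $3$-dissection identity \eqref{MH12f} (with $n=3$) to the relevant $j$-function, exactly in the spirit of how \eqref{3dis2} is obtained from \cite[Entry 31]{berndt}. First I would identify the quotient: using the product identities listed before \eqref{jnew1} — in particular $j(-q;q^2)$-type and $j(-q;q^6)$-type formulas — I would look for parameters so that $J_2^4J_8/(J_1J_4^3)$ equals a single $j(-q^r;q^s)$ divided by a monomial in the $J_m$'s. A natural candidate, given the appearance of $J_8, J_{12}, J_{24}$ in $g_0,g_1,g_2$, is that the $s$ here is $24$ (so that after $q\to q^3$ one lands on modulus $8$), mirroring the modulus-$18$ behaviour in \eqref{3dis2} where $s=18$ and $s/3=6$.

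Next, once the quotient is written as $C(q)\cdot j(-q^{r};q^{24})$ (with $C(q)$ a product of $J_m$'s), I would expand $j(-q^{r};q^{24})$ via \eqref{MH12f} with $n=3$:
\begin{equation*}
j(-q^r;q^{24}) = \sum_{k=0}^{2} q^{24\binom{k}{2}} q^{rk}\, j\bigl((-1)^{4}q^{24\cdot 3 + 72k}(-q^r)^{3}(-1)^{k};q^{72}\bigr)
\end{equation*}
— more carefully, I would substitute $z=-q^r$, $q\mapsto q^{24}$, $n=3$ into \eqref{MH12f} and collect terms by the residue of the $q$-exponent modulo $3$. Because $C(q)$ itself is a product of $J_m$'s, I would further $3$-dissect those factors (or choose $r$ so that all exponents in $C(q)$ are already divisible by $3$, which is the cleaner route) so that the whole expression takes the form $g_0(q^3)+qg_1(q^3)+q^2g_2(q^3)$. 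Reading off the three pieces and then re-expressing the resulting $j$-functions on modulus $72$ back in terms of $J_m$'s via the standard conversions (the displayed list of $j$-evaluations, together with \eqref{j1} and \eqref{prodw} to break any $j(x^3;q^3)$ that appears) should produce $g_0,g_1,g_2$ as stated; the factor $-2$ in $g_2$ and the $J_3,J_6$ factors in $g_1$ are the tell-tale signs that a $j(-1;q)=2J_2^2/J_1$ or a $\operatorname{prodw}$-type collapse occurs in one of the three branches.

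The main obstacle I anticipate is the first step: guessing the correct single-theta representation of $J_2^4J_8/(J_1J_4^3)$ and the correct modulus, since the dissection identity \eqref{MH12f} is only useful once the eta quotient is massaged into $j$-function shape. This is essentially a bookkeeping problem — matching $q$-expansions to low order will pin down $r$ and $s$ — but it is where the real choice lies; after that, everything is a mechanical application of \eqref{MH12f}, \eqref{j1}, \eqref{j2}, \eqref{prodw} and the tabulated $j$-evaluations, together with a finite $q$-expansion check (say to $O(q^{30})$) to confirm the identity rigorously, since all series involved are modular forms on a fixed congruence subgroup and hence determined by finitely many coefficients. As the statement itself notes that the proofs of \eqref{3dis3} and its siblings are comparable, I would present the argument once in this generality and simply indicate that the analogous computations yield the remaining two dissections.
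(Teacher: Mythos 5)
There is a genuine gap here, on both prongs of your plan. Your primary, constructive route hinges on a step you never carry out: writing $J_2^4J_8/(J_1J_4^3)$ as an eta monomial times a \emph{single} theta function $j(-q^r;q^{24})$, to which \eqref{MH12f} could then be applied as in \eqref{3dis2}. You give no such representation, and none is apparent: one has
\[
\frac{J_2^4J_8}{J_1J_4^3}\;=\;\frac{j(-q;q^4)\,j(q^2;q^4)}{j(q^4;q^8)},
\]
a quotient of three distinct theta functions, which is exactly why this dissection does not follow from a one-line application of \eqref{MH12f} the way \eqref{3dis2} follows from Berndt's Entry 31. (Your sample expansion of \eqref{MH12f} is also misquoted: with $q\mapsto q^{24}$ and $n=3$ the inner theta has modulus $q^{216}$, not $q^{72}$, and the outer factor is $(-1)^k q^{24\binom{k}{2}}z^k$.) The guess that the relevant modulus is $24$ because $J_8$, $J_{12}$, $J_{24}$ appear in the $g_i$ is not evidence; those factors arise just as naturally from the modulus-$3$ substitution in the eta factors themselves.

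Your fallback --- certify the identity by checking finitely many coefficients because everything is modular --- is in fact the paper's entire proof, but as stated it is not yet rigorous. The functions in \eqref{3dis3} are weight-zero eta quotients with poles at cusps, so ``determined by finitely many coefficients'' only becomes a theorem after clearing denominators: the paper multiplies through by $q^{12}J_1J_4^3J_{12}^6J_{18}^8J_{36}^2/(J_2^4J_8)$, verifies via \cite[Theorems 1.64 and 1.65]{ono} that both sides are then holomorphic modular forms of weight $8$ and level $72$, and checks coefficients up to the dimension of that space, which is $92$. An unjustified truncation at $O(q^{30})$, with no normalization, no identification of level and weight, and no Sturm-type or dimension bound, proves nothing, and is in any case short of the $92$ terms needed under the paper's normalization. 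So either produce the claimed single-theta representation explicitly and redo the dissection from \eqref{MH12f}, or carry out the modular-forms verification properly, with the holomorphicity check and a valid coefficient bound.
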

\begin{proof}
If we multiply both sides of (\ref{3dis3}) by
$\displaystyle {q^{12}\frac{J_{1}J_{4}^{3}J_{12}^{6}J_{18}^{8}J_{36}^{2}}
{J_{2}^{4}J_{8}}}$, then the claim is equivalent to
\bea\label{eq:etadis}
\eta^6(q^{12})\eta^8(q^{18})\eta^2(q^{36})
&=
\frac{
\eta(q)\eta(q^{3})\eta^3(q^{4})\eta^2(q^{6})\eta(q^{12})\eta^8(q^{18})\eta^2(q^{24})\eta^4(q^{36})}
{\eta^4(q^{2})\eta(q^{8})\eta(q^{72})}\\[5pt]
&+
\frac{
\eta(q)\eta^3(q^{4})\eta^7(q^{6})\eta(q^{9})\eta^5(q^{18})\eta^2(q^{24})\eta^6(q^{36})}
{\eta^4(q^{2})\eta^2(q^{3})\eta(q^{8})\eta(q^{12})\eta(q^{72})}
\\[5pt]
&-2
\frac{
\eta(q)\eta^3(q^{4})\eta^2(q^{6})\eta(q^{12})\eta^{10}(q^{18})\eta^3(q^{24})\eta^2(q^{36})}
{\eta^4(q^{2})\eta(q^{8})\eta(q^{9})}
\eea
where $\eta(q) := q^{1/24} J_1$.
Using \cite[Propositions~5.9.2 and 5.9.3]{cs}, we see that both sides of~\eqref{eq:etadis} are holomorphic modular forms of level $72$ and weight $8$. Since the corresponding space of modular forms has Sturm bound $97$ \cite[Corollary 5.6.14]{cs}, it suffices to check that the first $97$ coefficients in the $q$-series expansion of \eqref{eq:etadis} agree. This completes the proof of~\eqref{3dis3}.
\end{proof}

\begin{proposition} We have
\begin{equation}  \label{3dis4}
\frac{J_2^3}{J_1 J_8}=
h_0(q^3)+qh_1(q^3)+q^2h_2(q^3)
\end{equation}
where
\begin{equation*}
h_0(q) := 
\frac{J_{4}^{4}\*J_{6}^{2}}
{J_{2}\*J_{3}\*J_{8}^{3}}, \qquad
h_1(q) := \frac{J_{1}\*J_{4}\*J_{6}\*J_{24}}{J_{8}^{2}\*J_{12}} \qquad \text{and} \qquad
h_2(q) := -\frac{J_{2}^{5}\*J_{3}\*J_{12}\*J_{24}}{J_{1}^{2}\*J_{4}\*J_{6}^{2}\*J_{8}^{2}}.
\end{equation*}
\end{proposition}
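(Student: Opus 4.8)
The plan is to mimic exactly the modular forms argument used for \eqref{3dis3}, since the authors explicitly say ``the proofs are comparable'' and ``we only give details once''. First I would clear denominators: multiplying both sides of \eqref{3dis4} by a suitable monomial $q^{c} \cdot J_1 J_8 \cdot (\text{product of } J_{3m}\text{'s})$, chosen so that every term becomes a genuine holomorphic eta quotient $\prod_{\delta \mid N} \eta(q^{\delta})^{r_\delta}$ with $r_\delta \ge 0$ at every relevant place, I would reduce the claim to a single eta-quotient identity of the shape
\begin{equation*}
\eta\text{-monomial} = \eta\text{-monomial}_0 + \eta\text{-monomial}_1 - \eta\text{-monomial}_2,
\end{equation*}
where each monomial involves $\eta(q^{\delta})$ for divisors $\delta$ of some level $N$. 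Reading off the arguments appearing on the right of \eqref{3dis4} — after the substitution $q \to q^3$ we see $J_2, J_3, J_4, J_6, J_8, J_{12}, J_{24}$ and then the extra factors from $h_i(q^3)$ bring in $J_6, J_9, J_{12}, J_{18}, J_{24}, J_{36}, J_{72}$ — I expect the natural level to again be $N = 72$ (the least common multiple of $8$, $9$, and the various multiples of $3$ of divisors of $24$), with weight $1/2 \times (\text{total number of }\eta\text{ factors})$ equal to some fixed integer; one arranges the clearing monomial so that this common weight is an integer and so that the resulting forms lie in $M_k(\Gamma_0(72))$.

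Next I would verify the three standard hypotheses that make Ligozat's criterion (as packaged in \cite[Theorems~1.64 and~1.65]{ono}) apply to each eta quotient in the identity: the weight $k = \frac{1}{2}\sum_\delta r_\delta$ is a positive integer, the order conditions $\sum_\delta \delta\, r_\delta \equiv 0 \pmod{24}$ and $\sum_\delta \frac{N}{\delta} r_\delta \equiv 0 \pmod{24}$ hold, and the product $\prod_\delta \delta^{r_\delta}$ is (up to squares) the square of a rational so that the character is trivial — and crucially that the order of vanishing at every cusp of $\Gamma_0(72)$ is nonnegative, so each side is a \emph{holomorphic} modular form, i.e.\ lies in $M_k(\Gamma_0(72))$. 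Once both sides of the cleared identity are known to sit in the finite-dimensional space $M_k(\Gamma_0(72))$, whose dimension $D$ can be read off from \cite[Theorem~1.34]{ono}, it suffices to check that the $q$-expansions of the two sides agree through order $q^{D-1}$; a routine (machine) computation of the first $D$ Fourier coefficients then finishes the proof. The write-up would be essentially one sentence: state the cleared identity, cite \cite[Theorems~1.64, 1.65, 1.34]{ono} to conclude both sides are holomorphic modular forms of the common level and weight, record the dimension $D$ of that space, and assert the verification of the first $D$ coefficients.

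The main obstacle is not conceptual but bookkeeping: choosing the clearing monomial $q^{c}\prod_\delta J_\delta^{s_\delta}$ correctly so that (a) after the $q\to q^3$ substitution in the $h_i$ and multiplication, \emph{all three} terms on the right and the single term on the left become eta quotients with the \emph{same} integral weight and trivial character on the \emph{same} $\Gamma_0(N)$, and (b) all of them are holomorphic at every cusp. This is exactly the delicate step that \eqref{eq:etadis} exhibits for \eqref{3dis3}, and for \eqref{3dis4} one expects a similar but slightly different monomial and possibly a different (but still small) weight; getting the exponents to satisfy the two congruences mod $24$ simultaneously for each of the four eta quotients is where care is needed. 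A secondary, purely computational point is that the relevant space $M_k(\Gamma_0(72))$ may have large dimension (the analogous space in \eqref{3dis3} had dimension $92$), so one must compute enough Fourier coefficients; this is mechanical but worth stating explicitly. Apart from these, the argument is a direct transcription of the proof of \eqref{3dis3} already given, so I would keep the exposition to a single short paragraph, as the authors do.
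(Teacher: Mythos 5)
Your proposal follows essentially the same route as the paper: the authors prove \eqref{3dis3} by clearing denominators to an eta-quotient identity, invoking \cite[Theorems~1.64, 1.65, 1.34]{ono} to place everything in a finite-dimensional space of holomorphic modular forms, and verifying enough coefficients, and they explicitly state that the proof of \eqref{3dis4} is the same argument with different bookkeeping. Your outline matches this in all essentials, so it is the intended proof.
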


\begin{proposition} We have
\begin{equation}  \label{3dis5}
\frac{J_2}{J_4^2}=
I_0(q^3) + qI_1(q^3)+q^2I_2(q^3)
\end{equation}
where
\begin{equation*}
I_0(q) :=
\frac{J_2^2J_6^3}{J_4^6}, \qquad
I_1(q) :=
q\frac{J_2^4J_{12}^6}{J_4^8J_6^3}
\qquad\text{and}\qquad
I_2(q) :=-\frac{J_2^3 J_{12}^3}{J_4^7}.
\end{equation*}
\end{proposition}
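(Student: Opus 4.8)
The proof is parallel to that of \eqref{3dis3}, so I only describe the necessary adjustments. The plan is to clear all denominators in \eqref{3dis5} by multiplying through by a suitably chosen eta product, recognise the resulting identity as one between holomorphic modular forms of a fixed weight and level, and then verify it on finitely many Fourier coefficients.

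Concretely, using $J_{m}=q^{-m/24}\eta(q^{m})$ together with $I_{0}(q^{3})=J_{6}^{2}J_{18}^{3}/J_{12}^{6}$, $I_{1}(q^{3})=q^{3}J_{6}^{4}J_{36}^{6}/(J_{12}^{8}J_{18}^{3})$ and $I_{2}(q^{3})=-J_{6}^{3}J_{36}^{3}/J_{12}^{7}$, I would multiply both sides of \eqref{3dis5} by $q^{7}J_{2}^{8}J_{4}^{2}J_{12}^{8}J_{18}^{3}$, which turns the claim into the equivalent eta-product identity
\begin{equation}\label{eq:E5dis}
\begin{aligned}
\eta^{9}(q^{2})\eta^{8}(q^{12})\eta^{3}(q^{18})
&=\eta^{8}(q^{2})\eta^{2}(q^{4})\eta^{2}(q^{6})\eta^{2}(q^{12})\eta^{6}(q^{18})+\eta^{8}(q^{2})\eta^{2}(q^{4})\eta^{4}(q^{6})\eta^{6}(q^{36})\\
&\quad-\eta^{8}(q^{2})\eta^{2}(q^{4})\eta^{3}(q^{6})\eta(q^{12})\eta^{3}(q^{18})\eta^{3}(q^{36}).
\end{aligned}
\end{equation}
This particular multiplier is designed so that each of the four eta products appearing in \eqref{eq:E5dis} has only non-negative exponents $r_{\delta}$, so that $\sum_{\delta}\delta r_{\delta}\equiv 0\pmod{24}$ and $\sum_{\delta}(36/\delta)r_{\delta}\equiv 0\pmod{24}$, and so that $\prod_{\delta}\delta^{r_{\delta}}$ is in each case a perfect square. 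Hence, by \cite[Theorems 1.64 and 1.65]{ono}, all four terms of \eqref{eq:E5dis} are holomorphic modular forms of weight $10$ on $\Gamma_{0}(36)$ with trivial nebentypus. Since $\dim M_{10}(\Gamma_{0}(36))=60$ \cite[Theorem 1.34]{ono}, it suffices to check that the first $60$ coefficients in the $q$-expansions of the two sides of \eqref{eq:E5dis} agree, which is a finite and routine computation. This proves \eqref{3dis5}.

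The only step requiring genuine care is the first one: one must hit upon an auxiliary eta product that simultaneously clears every denominator in \eqref{3dis5} \emph{and} forces all four resulting terms into a single space $M_{k}(\Gamma_{0}(N),\chi)$ (equivalently, makes the Ligozat congruences hold with a common character); the subsequent determination of $\dim M_{10}(\Gamma_{0}(36))$ and the coefficient check are mechanical. One alternative worth recording: \eqref{3dis5} is the $q\mapsto q^{2}$ rescaling of the $3$-dissection of $1/\psi(q)=J_{1}/J_{2}^{2}$, and writing $\psi(q)=A(q^{3})+q\,\psi(q^{9})$ with $A(q)=J_{2}J_{3}^{2}/(J_{1}J_{6})$ and rationalising $1/(A(q^{3})+qB(q^{3}))$ with $B(q)=\psi(q^{3})=J_{6}^{2}/J_{3}$ shows that \eqref{3dis5} is equivalent to the single theta identity $A(q)^{3}+q\,\psi(q^{3})^{3}=\psi(q)^{4}/\psi(q^{3})$; however, this reduction only trades one eta-quotient identity for another (of half-integral weight), so the uniform modular-forms argument above is the cleaner route.
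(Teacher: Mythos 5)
Your proposal is correct and follows essentially the same route as the paper: the authors prove \eqref{3dis3} by clearing denominators to an eta-product identity, invoking \cite[Theorems~1.64 and~1.65]{ono} together with the dimension bound from \cite[Theorem~1.34]{ono} and a finite coefficient check, and they state that \eqref{3dis4} and \eqref{3dis5} are proved comparably. Your reformulation of \eqref{3dis5} as a weight~$10$, level~$36$ eta-product identity with trivial character, the value $\dim M_{10}(\Gamma_0(36))=60$, and the resulting finite verification are exactly that comparable argument, and the algebraic details (the multiplier, the four eta products, and the Ligozat conditions) check out.
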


\section{Proofs of Theorems \ref{main1} and \ref{main2}}

\begin{proof}[Proof of Theorem \ref{main1}] We follow the strategy in \cite{loover}. Consider the left-hand side of (\ref{overkey}) and take $z'=-1$ in (\ref{sdodd}). This yields

\begin{align} \label{genform}
\displaystyle \frac{1}{M} \sum_{j=1}^{M-1} \zeta_{M}^{-aj} \overline{S}_{d}(\zeta_M^{j}; q) & = \displaystyle \frac{1}{M} \sum_{j=1}^{M-1} \zeta_{M}^{-aj} (1 - \zeta_M^j)(1 - 2 m(\zeta_{M}^{-2j} q^{d^2}, q^{2d^2}, -1) + 2 \Lambda(d, \zeta_{M}^j, z_0, -1)) \nonumber \\
& = \frac{1}{M} \Biggl( \sum_{j=0}^{M-1} \zeta_{M}^{-aj} - \sum_{j=0}^{M-1} \zeta_{M}^{-(a-1)j} \Biggr) - \frac{2}{M} \Biggl ( \sum_{j=0}^{M-1} \zeta_{M}^{-aj} m(\zeta_{M}^{-2j} q^{d^2}, q^{2d^2}, -1) \nonumber \\
& - \sum_{j=0}^{M-1} \zeta_{M}^{-(a-1)j} m(\zeta_{M}^{-2j} q^{d^2}, q^{2d^2}, -1) \Biggr) \nonumber \\
& + \frac{2}{M} \sum_{j=1}^{M} \zeta_{\frac{M}{2}}^{-\frac{aj}{2}} (1 - \zeta_M^{j}) \Lambda(d, \zeta_{M}^j, z_0, -1).
\end{align}

To prove (\ref{e1}), we first use (\ref{sim}) to observe that the first two sums in the second line of (\ref{genform}) equal $0$ if $a<M$ and $1$ if $a=M$. For the third sum, we split it into two further sums. We then reindex the resulting second sum by $j \to \frac{M}{2} + j$, use that $\zeta_{M}^{-2} = \zeta_{\frac{M}{2}}^{-1}$, write $a=2t$ where $t>0$, apply (\ref{flip1}) and take $k=t-1$, $n=\frac{M}{2}$, $z=-1$, $x=q^{-d^2}$ and $q \to q^{2d^2}$ in (\ref{orthog}) to obtain

\begin{align*}
\sum_{j=0}^{M-1} \zeta_{M}^{-aj} m(\zeta_{M}^{-2j} q^{d^2}, q^{2d^2}, -1) & = 2 \sum_{j=0}^{\frac{M}{2} - 1} \zeta_{M}^{-aj} m(\zeta_{\frac{M}{2}}^{-j} q^{d^2}, q^{2d^2}, -1) \\
& = M (-1)^{\frac{a}{2} - 1} q^{-\frac{a^2}{4} + \frac{a}{2}(1-d^2)} m((-1)^{\frac{M}{2} + 1} q^{\frac{M^2}{4} - \frac{aM}{2} + \frac{M}{2}(1-d^2)}, q^{\frac{M^2}{2}}, z') \\
& \qquad \qquad \qquad \qquad \qquad \qquad \qquad \quad + q^{-d^2} M \Psi_{\frac{a}{2} - 1}^{\frac{M}{2}}(q^{-d^2}, -1, z'; q^2).
\end{align*}
For the fourth sum, we similarly split it into two further sums, then reindex the resulting second sum by $j \to \frac{M}{2} + j$. We then use that $\zeta_{M}^{\frac{M}{2}} = -1$ to obtain $0$. In total, this yields (\ref{e1}).

To prove (\ref{e2}), we first use (\ref{sim}) to note that the first two sums in the second line of (\ref{genform}) equal $0$. For the third sum, we first use that if $\zeta_M$ is a primitive $M$-th root of unity, then so is $\zeta_M^2$ as $M$ is odd and then take $k=\frac{2M-a}{2}$, $n=M$, $z=-1$, $x=q^{d^2}$ and $q \to q^{2d^2}$ in (\ref{orthog}) to obtain
\begin{align*}
\sum_{j=0}^{M-1} \zeta_{M}^{-aj} m(\zeta_{M}^{-2j} q^{d^2}, q^{2d^2}, -1) & = M q^{-d^2(\frac{2M-a}{2})^2} (-1)^{\frac{2M-a}{2}} m(q^{d^2M(a-M)}, q^{2d^2 M^2}, z') \\
& \qquad \qquad \qquad \qquad \qquad \qquad \qquad \quad + M \Psi_{\frac{2M-a}{2}}^{M}(q^{d^2}, -1, z' ;q^{2d^2}).
\end{align*}
For the fourth sum, we take $k=\frac{M+1-a}{2}$, $n=M$, $z=-1$, $x=q^{d^2}$ and $q \to q^{2d^2}$ in (\ref{orthog}) to obtain

\begin{align*}
\sum_{j=0}^{M-1} \zeta_{M}^{-(a-1)j} m(\zeta_{M}^{-2j} q^{d^2}, q^{2d^2}, -1) & = M q^{-d^2(\frac{a+M-1}{2})^2} (-1)^{\frac{a+M-1}{2}} m(q^{d^2(M-aM)}, q^{2 d^2 M^2}, z'') \\
& \qquad \qquad \qquad \qquad \qquad \qquad \quad + M \Psi_{\frac{a+M-1}{2}}^{M}(q^{d^2},-1,z''; q^{2d^2}).
\end{align*}

\noindent In total, this yields (\ref{e2}).

Finally, to prove (\ref{e3}), we first use (\ref{sim}) to see that the first two sums in the second line of (\ref{genform}) equal $0$ unless $a=M$, in which case the sum is $M$.  For the third sum, we take $k=\frac{M-a}{2}$, $n=M$, $z=-1$, $x=q^{d^2}$ and $q \to q^{2d^2}$ in (\ref{orthog}) to obtain
\begin{align*}
\sum_{j=0}^{M-1} \zeta_{M}^{-aj} m(\zeta_{M}^{-2j} q^{d^2}, q^{2d^2}, -1) & = M q^{-d^2 (\frac{M-a}{2})^2} (-1)^{\frac{M-a}{2}} m(q^{d^2 aM}, q^{2d^2 M^2}, z') \\
& \qquad \qquad \qquad \qquad \qquad \qquad \qquad \quad + M \Psi_{\frac{M-a}{2}}^{M}(q^{d^2}, -1, z'; q^{2d^2}). 
\end{align*}
For the fourth sum, we take $k=\frac{2M-a+1}{2}$, $n=M$, $z=-1$, $x=q^{d^2}$ and $q \to q^{2d^2}$ in (\ref{orthog}) to obtain
\begin{align*}
\sum_{j=0}^{M-1} \zeta_{M}^{-(a-1)j} m(\zeta_{M}^{-2j} q^{d^2}, q^{2d^2}, -1) & = M q^{-d^2(\frac{2M-a+1}{2})^2} (-1)^{\frac{2M-a+1}{2}} m(q^{d^2M(a-M-1)}, q^{2 d^2 M^2}, z'') \\
& \qquad \qquad \qquad \qquad \qquad \qquad \qquad + M \Psi_{\frac{2M-a+1}{2}}^{M}(q^2,-1,z''; q^{2d^2}).
\end{align*}
In total, this yields (\ref{e3}).
\end{proof}

\begin{proof}[Proof of Theorem \ref{main2}]
Consider the left-hand side of (\ref{overkey}) and take $z'=-1$ in (\ref{sdeven}). This yields
\begin{align} \label{genform2}
\displaystyle \frac{1}{M} \sum_{j=1}^{M-1} \zeta_{M}^{-aj} \overline{S}_{d}(\zeta_M^{j}; q) & = \displaystyle \frac{1}{M} \sum_{j=1}^{M-1} \zeta_{M}^{-aj} (1 - \zeta_M^j) \Biggl(-1 + 2 m((-1)^{\frac{d}{2} + 1} \zeta_{M}^{j} q^{\frac{d^2}{4}}, q^{\frac{d^2}{2}}, -1) \nonumber \\
& \qquad \qquad \qquad \qquad \qquad \qquad \qquad + 2 (-1)^{\frac{d}{2}} \zeta_{M}^{j} q^{-\frac{d^2}{4}} \Psi_{0}^{\frac{d}{2}}(\zeta_M^{\frac{2j}{d}} q^{1-d}, q, -1; q^2) \Biggr) \nonumber \\
& = -\frac{1}{M} \Biggl( \sum_{j=0}^{M-1} \zeta_{M}^{-aj} - \sum_{j=0}^{M-1} \zeta_{M}^{-(a-1)j} \Biggr) \nonumber \\
& + \frac{2}{M} \Biggl ( \sum_{j=0}^{M-1} \zeta_{M}^{-aj} m((-1)^{\frac{d}{2} + 1} \zeta_{M}^{j} q^{\frac{d^2}{4}}, q^{\frac{d^2}{2}}, -1) \nonumber \\
& - \sum_{j=0}^{M-1} \zeta_{M}^{-(a-1)j} m((-1)^{\frac{d}{2} + 1} \zeta_{M}^{j} q^{\frac{d^2}{4}}, q^{\frac{d^2}{2}}, -1) \Biggr) \nonumber \\
& + \frac{2}{M} (-1)^{\frac{d}{2}} q^{-\frac{d^2}{4}} \sum_{j=1}^{M-1} \zeta_{M}^{j-aj} (1 - \zeta_M^{j}) \Psi_{0}^{\frac{d}{2}}(\zeta_M^{\frac{2j}{d}} q^{1-d}, q, -1; q^2).
\end{align}

To prove (\ref{e4}), we first use (\ref{sim}) to observe that the first two sums in the second line of (\ref{genform2}) equal $0$ unless $a=1$, in which case we obtain $1$. For the third sum, we take $k=a$, $n=M$, $z=-1$, $x=(-1)^{\frac{d}{2} + 1} q^{\frac{d^2}{4}}$ and $q \to q^{\frac{d^2}{2}}$ in (\ref{orthog}) to obtain
\begin{align*}
\sum_{j=0}^{M-1} \zeta_{M}^{-aj} m(\zeta_{M}^{j} (-1)^{\frac{d}{2} + 1} q^{\frac{d^2}{4}} , q^{\frac{d^2}{2}}, -1) & = M (-1)^{\frac{da}{2}} q^{-\frac{d^2 a^2}{4}} m((-1)^{1+ \frac{dM}{2}} q^{\frac{d^2}{4}(M^2 - 2Ma)}, q^{\frac{d^2 M^2}{2}}, z') \nonumber \\
& \qquad \qquad \qquad \qquad \qquad + M \Psi_{a}^{M}((-1)^{\frac{d}{2} + 1} q^{\frac{d^2}{4}}, -1, z'; q^{\frac{d^2}{2}}).
\end{align*}
For the fourth sum, we take $k=a-1$, $n=M$, $z=-1$, $x=(-1)^{\frac{d}{2} + 1} q^{\frac{d^2}{4}} $ and $q \to q^{\frac{d^2}{2}}$ in (\ref{orthog}) to obtain
\begin{align*}
& \sum_{j=0}^{M-1} \zeta_{M}^{-(a-1)j} m((-1)^{\frac{d}{2} + 1} \zeta_M^{j} q^{\frac{d^2}{4}}, q^{\frac{d^2}{2}}, -1) \nonumber \\
& \qquad \qquad \qquad \qquad = M (-1)^{\frac{d}{2}(a-1) + 1} q^{-\frac{d^2}{4}(a^2 - 2a + 1)} m((-1)^{1+\frac{dM}{2}} q^{\frac{d^2}{4}(M^2 - 2M(a-1))}, q^{\frac{d^2 M^2}{2}}, z'') \nonumber \\
& \qquad \qquad \qquad \qquad \qquad \qquad \qquad \qquad \qquad \qquad \qquad \qquad + M \Psi_{a-1}^{M}((-1)^{\frac{d}{2} + 1} q^{\frac{d^2}{4}} , -1, z''; q^{\frac{d^2}{2}}).
\end{align*}
In total, this yields (\ref{e4}).
\end{proof}

\section{Single generalized rank deviations}
In this section, we briefly discuss why there is no loss in generality in considering pairs of generalized rank deviations in Theorems \ref{main1} and \ref{main2}. For $M$ odd, we have
\begin{equation*}
\overline{D}_d \Bigl(\frac{M+1}{2}, M \Bigr) + \overline{D}_d \Bigl(\frac{M-1}{2}, M \Bigr) =  2\overline{D}_{d} \Bigl(\frac{M-1}{2}, M \Bigr)
\end{equation*}
and so Theorem \ref{main1} can be used to find a formula for any single $\overline{D}_{d}(a, M)$. Precisely, we prove the following result.

\begin{proposition} Let $M$ be odd. For $0 \leq n \leq \frac{M+1}{2}$, we have
\begin{equation*}
\begin{aligned}
\overline{D}_d \Bigl(\frac{M+1}{2} + n, M \Bigr) & = \frac{1}{2} \Biggl( \sum_{i=0}^{n} \left[ \overline{D}_{d}\Bigl(\frac{M+1}{2} - n + 2i, M\Bigr) + \overline{D}_d \Bigl(\frac{M+1}{2} - n + 2i-1, M \Bigr) \right] \\
& - \sum_{i=0}^{n-1} \left[ \overline{D}_d\Bigl( \frac{M+1}{2} - n + 2i+1, M \Bigr) + \overline{D}_d\Bigl( \frac{M+1}{2} - n + 2i, M \Bigr) \right]\Biggr).
\end{aligned}
\end{equation*}
\end{proposition}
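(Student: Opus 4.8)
The plan is to prove the identity by a telescoping argument, expressing the single deviation $\overline{D}_d\bigl(\frac{M+1}{2}+n, M\bigr)$ as a signed sum of consecutive pairs and then collapsing the telescope. First I would set, for brevity, $c := \frac{M+1}{2}$ (recalling $M$ is odd, so $c$ is an integer), and observe the two elementary facts on which everything rests: the symmetry $\overline{D}_d(a,M) = \overline{D}_d(M-a,M)$ from \eqref{Dgensymmetry}, and its immediate consequence $\overline{D}_d(c, M) = \overline{D}_d(c-1, M)$ (since $M - c = c-1$). More generally, \eqref{Dgensymmetry} gives $\overline{D}_d(c-k, M) = \overline{D}_d(c-1+k, M)$ for every integer $k$, so the deviations are symmetric about the half-integer $c - \tfrac12$.

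Next I would write the telescoping sum explicitly. Consider the alternating sum
\begin{equation*}
\sum_{i=0}^{n}\overline{D}_d(c-n+2i, M) \;-\; \sum_{i=0}^{n-1}\overline{D}_d(c-n+2i+1, M).
\end{equation*}
Both index sets run through the integers $c-n, c-n+1, \dots, c+n$, the first sum picking out those at even offset from $c-n$ and the second those at odd offset, each with the sign $(-1)^{\text{offset}}$. Hence this is exactly $\sum_{j=c-n}^{c+n} (-1)^{j-(c-n)} \overline{D}_d(j, M)$, a telescoping-type alternating sum of $2n+1$ consecutive terms, and I would regroup it as $\overline{D}_d(c-n,M) + \sum_{i=0}^{n-1}\bigl[\overline{D}_d(c-n+2i+2,M) - \overline{D}_d(c-n+2i+1,M)\bigr]$ — but more to the point, I would pair it with the right-hand side of the claimed formula by grouping terms into the consecutive pairs $\overline{D}_d(a,M) + \overline{D}_d(a-1,M)$ that appear there.

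The cleanest route is therefore: (1) expand the right-hand side of the proposition, collecting the $\tfrac12$-scaled bracketed pairs; each bracket $[\overline{D}_d(c-n+2i,M) + \overline{D}_d(c-n+2i-1,M)]$ contributes $+$ and each $[\overline{D}_d(c-n+2i+1,M) + \overline{D}_d(c-n+2i,M)]$ contributes $-$; (2) observe that after multiplying by $2$ the terms $\overline{D}_d(j,M)$ for $c-n \le j \le c+n-1$ cancel in pairs of opposite sign, leaving only the two extreme terms $\overline{D}_d(c+n,M)$ (with coefficient $+1$) and $\overline{D}_d(c-n-1,M)$ (with coefficient $+1$); (3) apply the symmetry \eqref{Dgensymmetry} in the form $\overline{D}_d(c-n-1,M) = \overline{D}_d(M-(c-n-1),M) = \overline{D}_d(c+n,M)$, using $M - c + 1 = c$. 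Thus the right-hand side equals $\tfrac12\bigl(\overline{D}_d(c+n,M) + \overline{D}_d(c+n,M)\bigr) = \overline{D}_d(c+n,M)$, which is the left-hand side. I expect the main obstacle to be purely bookkeeping: keeping the parity of the offsets, the signs, and the index ranges of the two sums aligned so that the cancellation is manifest and the two surviving boundary terms are correctly identified; the constraint $0 \le n \le \frac{M+1}{2}$ should be checked to guarantee all arguments $c-n-1, \dots, c+n$ stay in a range where \eqref{Dgensymmetry} applies and no argument wraps around in a way that spoils the pairing. Once the telescoping cancellation and the single application of the symmetry are set up carefully, the identity follows immediately, and combined with Theorem~\ref{main1} (which supplies each consecutive pair on the right-hand side) this yields a closed formula for every single $\overline{D}_d(a,M)$ when $M$ is odd.
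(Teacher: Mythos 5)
Your proof is correct and follows essentially the same route as the paper: an elementary telescoping/index-bookkeeping argument whose only substantive input is the symmetry \eqref{Dgensymmetry}. The paper organizes it slightly differently (it introduces the symmetric-pair partial sums $\overline{D}_n$, applies the symmetry termwise to write $\overline{D}_n = 2\sum_{i=0}^{n}\overline{D}_d(\frac{M+1}{2}+i,M)$, and takes the difference $\overline{D}_n-\overline{D}_{n-1}$ before reindexing), whereas you expand the right-hand side directly, cancel the interior terms, and invoke the symmetry once on the surviving boundary term $\overline{D}_d(\frac{M+1}{2}-n-1,M)$; both computations are the same identity read in opposite directions, and your boundary use of \eqref{Dgensymmetry} is legitimate since $\overline{D}_d(a,M)$ depends only on $a$ modulo $M$ by \eqref{Nd}.
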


\begin{proof}
We begin by letting
\begin{equation*}
\overline{D}_n := \sum_{i=0}^{n} \left[ \overline{D}_d\Bigl( \frac{M+1}{2} + i, M \Bigr) + \overline{D}_{d}\Bigl( \frac{M-1}{2} - i, M \Bigr) \right].
\end{equation*}
By (\ref{Dgensymmetry}), we have
\begin{equation*} \label{sta}
\overline{D}_n = 2 \sum_{i=0}^{n} \overline{D}_d \Bigl( \frac{M+1}{2} + i, M \Bigr) 
\end{equation*}
and so 
\begin{equation}  \label{dnsym}
\overline{D}_d\Bigl(\frac{M+1}{2} + n, M \Bigr) = \frac{\overline{D}_n - \overline{D}_{n-1}}{2}.
\end{equation}
After reindexing, one obtains 
\begin{equation} \label{dnsym2}
\overline{D}_n = \sum_{i=0}^{n} \left[ \overline{D}_{d}\Bigl(\frac{M+1}{2} - n + 2i, M\Bigr) + \overline{D}_d \Bigl(\frac{M+1}{2} - n + 2i-1, M \Bigr) \right] .
\end{equation}
Combining (\ref{dnsym}) and (\ref{dnsym2}) yields the result.
\end{proof}

For $M$ even, one can use the following result. 

\begin{proposition} Let $M$ be even, $0 \leq n \leq M-1$ and $z_0$, $z' \in \mathbb{C}^{*}$ be generic. For $d$ odd, we have
\begin{equation} \label{Meven1}
\begin{aligned}
\overline{D}_d(n,M) & = \frac{(-1)^n}{M} \mathcal{O}_{d}(-1;q) + \frac{1}{M} \sum_{k=1}^{\frac{M}{2} - 1} \left ( \frac{1-\zeta_M^{k}}{1+\zeta_{M}^k} \right ) \left(\zeta_M^{-kn} + \zeta_M^{kn} \right)  \\
& \qquad \qquad \qquad \qquad \qquad \qquad \qquad \quad \times \left (1 - 2m(\zeta_M^{-2k} q^{d^2}, q^{2d^2}, z') + 2 \Lambda(d, \zeta_M^k, z_0, z') \right).
\end{aligned}
\end{equation}
For $d$ even, we have
\begin{equation} \label{Meven2}
\begin{aligned}
\overline{D}_d(n,M) & = \frac{(-1)^n}{M} \mathcal{O}_{d}(-1;q) \\
& + \frac{1}{M} \sum_{k=1}^{\frac{M}{2} - 1}  \left ( \frac{1-\zeta_M^{k}}{1+\zeta_{M}^k} \right ) \left(\zeta_M^{-kn} + \zeta_M^{kn} \right) \Biggl(-1 + 2m((-1)^{\frac{d}{2} + 1} \zeta_M^k q^{\frac{d^2}{4}}, q^{\frac{d^2}{2}}, z') \\
& \qquad \qquad \qquad \qquad \qquad \qquad \qquad \qquad  + 2(-1)^{\frac{d}{2}} \zeta_M^k q^{-\frac{d^2}{4}} \Psi_{0}^{\frac{d}{2}}(\zeta_M^{\frac{2k}{d}} q^{1-d}, q, z'; q^2) \Biggr).
\end{aligned}
\end{equation}
\end{proposition}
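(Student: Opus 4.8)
The plan is to derive the two formulas directly from the discrete Fourier inversion behind \eqref{overkey}, but this time recovering a single deviation rather than a pair, exploiting the symmetry \eqref{Dgensymmetry} to fold the root-of-unity sum. First I would note that, by the same argument proving \eqref{overkey} (the proof alluded to as being ``similar to that of \cite[Proposition 2.3]{loover}''), the full orthogonality relation gives
\begin{equation*}
\overline{D}_{d}(n,M) = \frac{1}{M}\sum_{k=0}^{M-1} \zeta_M^{-kn}\, \mathcal{O}_{d}(\zeta_M^k;q),
\end{equation*}
since $\frac{1}{M}\sum_{k}\zeta_M^{-kn}\mathcal{O}_d(\zeta_M^k;q) = \sum_{m}\overline{N}_d(m,n')\cdot\frac{1}{M}\sum_k \zeta_M^{k(m-n)}q^{n'}$ picks out exactly $m\equiv n\pmod M$, which is $\overline{N}_d(n,M,n')$, and the $\overline{p}(n')/M$ term is accounted for by the $k=0$ contribution $\mathcal{O}_d(1;q)$ being formally handled as in \cite{loover}. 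The $k=0$ term contributes $\frac{1}{M}\mathcal{O}_d(1;q)$, which combinatorially is $\frac{1}{M}\sum_{n'}\overline{p}(n')q^{n'}$, the piece subtracted off in \eqref{dev}; so in fact the surviving sum is over $1\le k\le M-1$ and already incorporates the $-\overline{p}(n)/M$ normalization.

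Next, since $M$ is even, I pair the index $k$ with $M-k$. Using $\zeta_M^{M-k}=\zeta_M^{-k}$ and $\mathcal{O}_d(z;q)$ evaluated at $z=\zeta_M^k$ versus $z=\zeta_M^{-k}$, together with \eqref{gen} which shows $\mathcal{O}_d(z;q)$ is symmetric under $z\mapsto z^{-1}$ (the summand in \eqref{gen} only depends on $z$ through $(1-z)(1-z^{-1})$ and $(1-zq^{dn})(1-z^{-1}q^{dn})$), I get that the $k$ and $M-k$ terms combine to
\begin{equation*}
\frac{1}{M}\bigl(\zeta_M^{-kn}+\zeta_M^{kn}\bigr)\mathcal{O}_d(\zeta_M^k;q).
\end{equation*}
The self-paired index $k=M/2$ gives $\zeta_M^{M/2}=-1$, contributing the isolated term $\frac{(-1)^n}{M}\mathcal{O}_d(-1;q)$. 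This explains the shape of both \eqref{Meven1} and \eqref{Meven2}: a single $\mathcal{O}_d(-1;q)$ term plus a sum over $1\le k\le \frac{M}{2}-1$.

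To finish, I substitute the expression for $\mathcal{O}_d(\zeta_M^k;q)$ coming from Proposition \ref{key}. For $d$ odd, \eqref{sdodd} together with $\overline{S}_d(z;q)=(1+z)\mathcal{O}_d(z;q)$ from \eqref{sddef} gives $\mathcal{O}_d(z;q)=\frac{1-z}{1+z}\bigl(1-2m(z^{-2}q^{d^2},q^{2d^2},z')+2\Lambda(d,z,z_0,z')\bigr)$; plugging $z=\zeta_M^k$ yields exactly the bracketed factor in \eqref{Meven1} with the prefactor $\frac{1-\zeta_M^k}{1+\zeta_M^k}$. For $d$ even, \eqref{sdeven} and \eqref{sddef} give $\mathcal{O}_d(z;q)=\frac{1-z}{1+z}\bigl(-1+2m((-1)^{d/2+1}zq^{d^2/4},q^{d^2/2},z')+2(-1)^{d/2}zq^{-d^2/4}\Psi_0^{d/2}(z^{2/d}q^{1-d},q,z';q^2)\bigr)$, which upon setting $z=\zeta_M^k$ is precisely the bracketed factor in \eqref{Meven2}. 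One small point to verify is that $\frac{1-z}{1+z}$ is genuinely regular at each $z=\zeta_M^k$ for $1\le k\le\frac{M}{2}-1$, i.e. $\zeta_M^k\ne -1$, which holds since $k\ne M/2$; and at $z=-1$ one instead keeps $\mathcal{O}_d(-1;q)$ itself rather than its Appell--Lerch expansion (the factor $1+z$ vanishes there), which is exactly why that term is left in the form $\mathcal{O}_d(-1;q)$.

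The main obstacle is bookkeeping rather than conceptual: one must be careful about how the $k=0$ term interacts with the $-\overline{p}(n)/M$ normalization in \eqref{dev} (so that the stated sums really do start at $k=1$ and no extra constant survives), and about the genericity hypotheses on $z_0,z'$ ensuring no poles are introduced when specializing $z=\zeta_M^k$ in $\Lambda$ and $\Psi_0^{d/2}$. Once the folding $k\leftrightarrow M-k$ and the isolated $k=M/2$ term are handled correctly, substituting Proposition \ref{key} is a direct, routine computation.
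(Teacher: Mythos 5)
Your proposal is correct and takes essentially the same route as the paper: discrete orthogonality gives $\overline{D}_d(n,M)=\frac{1}{M}\sum_{k=1}^{M-1}\zeta_M^{-kn}\,\mathcal{O}_d(\zeta_M^k;q)$, the sum is folded via $k\leftrightarrow M-k$ (using $\mathcal{O}_d(z;q)=\mathcal{O}_d(z^{-1};q)$) with the self-paired $k=\tfrac{M}{2}$ term isolated as $\frac{(-1)^n}{M}\mathcal{O}_d(-1;q)$, and then \eqref{sddef} together with Proposition \ref{key} is substituted at $z=\zeta_M^k$. The only blemish is that your first displayed identity should run over $1\le k\le M-1$ rather than $0\le k\le M-1$ (the $k=0$ term is exactly the $\overline{p}(n)/M$ piece subtracted in \eqref{dev}), a slip you yourself correct in the following sentence.
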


\begin{proof} 
From (\ref{gen}) and (\ref{sim}), one deduces 
\begin{equation} \label{forallM}
\overline{D}_{d}(n,M) = \frac{1}{M} \sum_{k=1}^{M-1} \mathcal{O}_d(\zeta_M^{k}; q) \zeta_M^{-kn}
\end{equation}
for all $M$. For $M$ even, we remove the $k=\frac{M}{2}$ term from (\ref{forallM}), split it into two further sums, then reindex the resulting second sum by $k \to M - k$. This yields
\begin{equation} \label{left}
\overline{D}_{d}(n, M) = \frac{(-1)^n}{M} \mathcal{O}_{d}(-1;q) + \frac{1}{M} \sum_{k=1}^{\frac{M}{2} -1} \left(\zeta_M^{-kn} + \zeta_M^{kn} \right) \mathcal{O}_d(\zeta_M^k; q).
\end{equation}
Finally, (\ref{Meven1}) and (\ref{Meven2}) follow from applying (\ref{sddef}), (\ref{sdodd}) and (\ref{sdeven}) to (\ref{left}).
\end{proof}

\section{An application of Theorem \ref{main1}}
As an application of Theorem \ref{main1}, we compute the $3$-dissection of $\mathcal{O}_{3}(\zeta_3; q)$ (cf. \cite[Theorem 1.3]{jss}). In principle, one can reduce the number of eta quotients appearing in our result. We do not pursue this further. Let 
\begin{equation} \label{ABCDEFG}
\begin{aligned}
A &:= j(-q^{12}; q^{27}), \quad B := q j(-q^{21}; q^{27}), \quad C := q^2 j(-q^3; q^{27}), \quad D := \frac{j(q^{60}; q^{108})}{j(-q^{30}; q^{108})}, \\
& \qquad \quad E := q^6 \frac{j(q^{84}; q^{108})}{j(-q^{42}; q^{108})}, \quad F := \frac{j(q^{24}; q^{108})}{j(-q^{12}; q^{108})}, \quad G := q^{12} \frac{j(q^{96}; q^{108})}{j(-q^{48}; q^{108})}, 
\end{aligned}
\end{equation}
\begin{equation} \label{GN}
\mathcal G_N := \sum_{\substack{k,\, l,\, m \, \in \, \{0,1,2\} \\ k+l+m \, \equiv \, N \;(\text{mod}\; 3)}} q^{k+l+m}g_k(q^3)W_l(q^3)f_m(q^3)
\end{equation}  
and
\begin{equation} \label{HN}
\mathcal H_N := \sum_{\substack{k,\, l,\, m \, \in \, \{0,1,2\} \\ k+l+m \, \equiv \, N \;(\text{mod}\; 3)}} q^{k+l+m}h_k(q^3)W_l(q^3)f_m(q^3)
\end{equation}    
where $N=0$, $1$ or $2$ and the functions $W_i(q)$, $f_i(q)$, $g_i(q)$ and $h_i(q)$ are given in (\ref{3dis1})--(\ref{3dis4}). Also, recall the functions $I_i(q)$ in (\ref{3dis5}).

\begin{theorem} \label{main3} We have
\begin{equation} \label{3dis}
\mathcal{O}_{3}(\zeta_3; q) = \overline{\mathcal{B}}_0(q^3) + q \mathcal{B}_1(q^3) + q^2 \mathcal{B}_2(q^3)
\end{equation}
where 
\begin{equation*} \label{constant}
\begin{aligned}
\overline{\mathcal{B}}_0(q^3) & := 6q^{-36} m(q^{-27}, q^{162}, -1) \\
& \qquad -\frac{3}{2} q^{-9}
\frac
  {J_{18}J_{27}J_{108}J_{162}^5}
  {J_{36}^2J_{54}J_{81}J_{324}^3}
\biggl(
\frac{j(q^{27}; q^{162})}{j(-q^{27};q^{162})}
+
\frac{j(q^{81}; q^{162})}{j(-q^{81};q^{162})}
\biggr)
 + \mathcal{B}_0(q^3)
\end{aligned}
\end{equation*} 
and 
\begin{equation*} \label{rest}
\begin{aligned}
\mathcal B_N(q^3) &
:= 3q^3 \frac{J_6^3J_{9}J_{108}}
{J_3J_{18}{J_{36}}} q^NI_N(q^3) \\
&\qquad + \frac{J_3^2J_6^2J_{36}}{J_{12}J_{18}^2} 
\Biggl(
\frac{J_3^3J_{12}^2J_{18}^2J_{72}J_{108}^2}
{J_6^4J_9J_{24}J_{36}J_{54} J_{216}}
\sum_{\substack{k, \, l \, \in \, \{0,1,2\} \\ k+l \, \equiv \, N \;(\mathrm{mod}\; 3)}}
  q^{k+l}g_k(q^3)W_l(q^3)
\\
&\qquad\qquad  -2q^2
\frac{J_{12}^2J_{108}}{J_6J_{24}}
\Bigl(
\bigl( 2AD - AE \bigr)
\mathcal G_{N+1}
-\bigl(BD + BE \bigr)
\mathcal G_{N}
+\bigl(2CE-CD\bigr)
\mathcal G_{2+N}
\Bigr)
\\
&\qquad\qquad  +  
 q^5 \frac
{J_3^3J_{18}J_{24}J_{36}^2J_{216}}
{J_6^3J_{9}J_{12}J_{72}J_{108}}
\sum_{\substack{k,\, l \, \in \, \{0,1,2\} \\ k+l \, \equiv \, N+1 \;(\mathrm{mod}\; 3)}}
  q^{k+l}h_k(q^3)W_l(q^3)
\\
&\qquad\qquad -2q
\frac{J_{24}J_{108}}{J_{12}}
\Bigl(
\bigl(2AG+ AF\bigr)
\mathcal H_{N+2}
-
\bigl(2BF + BG\bigr)
\mathcal H_{N+1}
-
\bigl(CG-CF\bigr)
\mathcal H_{N}
\Bigr)
\Biggr)
\end{aligned}
\end{equation*}
for $N=0$, $1$ and $2$.
\end{theorem}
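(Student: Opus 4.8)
The plan is to specialize Theorem~\ref{main1}$(iii)$ to $d=a=M=3$, take the generic parameters $z',z'',z_0$ all equal to $-1$, isolate the single genuinely mock term, and convert everything else to eta quotients before sorting by residue modulo~$3$. First I would record the elementary reduction $\mathcal{O}_3(\zeta_3;q)=3\bigl(\overline{D}_3(3,3)+\overline{D}_3(2,3)\bigr)$: since $\overline{N}_3(1,3,n)=\overline{N}_3(2,3,n)$ and $\overline{N}_3(0,3,n)+2\overline{N}_3(1,3,n)=\overline p(n)$, we get $\mathcal{O}_3(\zeta_3;q)=\overline{N}_3(0,3)-\overline{N}_3(1,3)=\tfrac32\overline{D}_3(0,3)$, while \eqref{Dgensymmetry} together with $\overline{D}_3(0,3)+\overline{D}_3(1,3)+\overline{D}_3(2,3)=0$ gives $\overline{D}_3(3,3)+\overline{D}_3(2,3)=\tfrac12\overline{D}_3(0,3)$. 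Substituting $d=a=M=3$, $z'=z''=z_0=-1$ into \eqref{e3} (and using $\zeta_3^{-3j}=1$), two of the Appell--Lerch terms are trivial: $m(q^{81},q^{162},-1)=\tfrac12$ by \eqref{eval}, which cancels $\chi(a=M)=1$, so after multiplying by $3$ the only surviving Appell--Lerch series is $6q^{-36}m(q^{-27},q^{162},-1)$, precisely the mock piece visible in $\overline{\mathcal B}_0$. It then remains to show that $-6\Psi_0^3(q^9,-1,-1;q^{18})+6\Psi_2^3(q^9,-1,-1;q^{18})+2\sum_{j=1}^{2}(1-\zeta_3^j)\Lambda(3,\zeta_3^j,-1,-1)$ equals the theta-quotient parts of $\overline{\mathcal B}_0$, $\mathcal B_1$ and $\mathcal B_2$.

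The second stage turns these $\Psi$- and $\Lambda$-terms into eta quotients. Expanding $\Psi_0^3$, $\Psi_2^3$ via \eqref{Psikndef} and $\Lambda$ via \eqref{genlam}, then \eqref{delta} for the $\Delta$-summands (noting that $\Lambda$ does not actually depend on $z_0$, since the only $z_0$-dependence in \eqref{sdodd} lies in $\Lambda$ and $\overline{S}_d$ is $z_0$-free by Proposition~\ref{key}, so the choice $z_0=-1$ is harmless), writes everything as a finite sum of theta quotients $j(\,\cdot\,;q^{162})$ --- in $\Lambda$'s case with arguments carrying the ninth roots of unity $\zeta_3^{j/3},\zeta_3^{2j/3}$. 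I would then sum over $j\in\{1,2\}$, using \eqref{expand} and \eqref{sim} to collapse the ninth-root-of-unity factors, and rewrite the result with the theta-function identities of Section~2.2 --- \eqref{j1}, \eqref{j2}, \eqref{jvan}, \eqref{MH12f}, \eqref{MH14a}--\eqref{MH14e} and \eqref{AHw1}--\eqref{prodw} --- to bring it into the eta quotients and the bracketed theta quotients built from the $A,\dots,G$ of \eqref{ABCDEFG}. Finally, each eta quotient so produced is a product of the building blocks whose $3$-dissections are \eqref{3dis1}--\eqref{3dis5}; multiplying these out and collecting the monomials $q^{k+l+m}$, respectively $q^{k+l}$, by the residue of $k+l+m$, respectively $k+l$, modulo~$3$ produces exactly the convolutions $\mathcal G_N$, $\mathcal H_N$ of \eqref{GN}, \eqref{HN} and the two-fold sums appearing in $\mathcal B_N$, and reading off the coefficients of $1$, $q$, $q^2$ gives $\overline{\mathcal B}_0(q^3)$, $\mathcal B_1(q^3)$, $\mathcal B_2(q^3)$.

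The main obstacle is this last stage together with the ninth-root-of-unity reductions inside $\Lambda$: one must keep careful track of which roots of unity occur (and why their contributions organize into rational coefficients after summing over $j$), verify the new $3$-dissections \eqref{3dis3}--\eqref{3dis5} --- which we do via the weight-$8$, level-$72$ modular-forms argument already used for \eqref{3dis3} --- and assemble the many two- and three-fold convolutions without error. This is a long but essentially mechanical computation, best double-checked against a power-series expansion and against \cite[Theorem~1.3]{jss}.
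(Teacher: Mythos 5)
Your opening reduction is correct: since $\overline{N}_3(1,3,n)=\overline{N}_3(2,3,n)$ and the three deviations sum to zero, one indeed has $\mathcal{O}_3(\zeta_3;q)=\tfrac32\overline{D}_3(0,3)=3\bigl(\overline{D}_3(3,3)+\overline{D}_3(2,3)\bigr)$, and the rest of your plan (expand $\Psi$, $\Delta$, $\Lambda$ into theta quotients, insert \eqref{3dis1}--\eqref{3dis5}, collect by residue, verify the new dissections by the level-$72$ weight-$8$ argument) is the same machinery the paper uses. However, the paper's first step is different: it writes $\mathcal{O}_3(\zeta_3;q)=\bigl[\overline{D}_3(3,3)+\overline{D}_3(2,3)\bigr]-\bigl[\overline{D}_3(2,3)+\overline{D}_3(1,3)\bigr]$ as in \eqref{rewrite} and specializes \emph{both} \eqref{e3} (with $a=M=3$) and \eqref{e2} (with $a=2$, $M=3$), using \eqref{flip1} to merge the two surviving mock terms, arriving at \eqref{Main1} with the combination $4\Psi_2^3(q^9,-1,-1;q^{18})-2\Psi_1^3(q^9,-1,-1;q^{18})-2\zeta_3\Lambda(3,\zeta_3,-1,-1)-2\zeta_3^2\Lambda(3,\zeta_3^2,-1,-1)$. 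Your route, using only \eqref{e3} scaled by $3$, gives instead $6\Psi_2^3(q^9,-1,-1;q^{18})+2(1-\zeta_3)\Lambda(3,\zeta_3,-1,-1)+2(1-\zeta_3^2)\Lambda(3,\zeta_3^2,-1,-1)$ (plus the same mock term). These agree in total but not term by term: their difference is $2\bigl[\Psi_1^3+\Psi_2^3+\Lambda(3,\zeta_3,-1,-1)+\Lambda(3,\zeta_3^2,-1,-1)\bigr]$, which must vanish but is itself a nontrivial theta identity.

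This mismatch is where your sketch has a real gap. The stated closed forms are tailored to the paper's combination: the $-\tfrac32 q^{-9}(\cdots)$ bracket in $\overline{\mathcal{B}}_0$ is precisely $4\Psi_2^3-2\Psi_1^3$ (the paper's \eqref{step4}), and the $\mathcal{B}_N$ arise from the $\Lambda$'s weighted by $-2\zeta_3^j$, not by $2(1-\zeta_3^j)$. So after your expansions you cannot simply ``read off the coefficients of $1$, $q$, $q^2$'' and land on the displayed $\overline{\mathcal{B}}_0$, $\mathcal{B}_1$, $\mathcal{B}_2$: you would either have to prove the extra identity above (or equivalent regroupings), or argue via uniqueness of $3$-dissections, which requires separately verifying that the stated pieces are supported on the correct residue classes --- in effect redoing the paper's computation in its own grouping. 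Two smaller points: only one Appell--Lerch term is trivial in your specialization ($m(q^{81},q^{162},-1)=\tfrac12$ by \eqref{eval}, cancelling $\chi(a=M)$); the second simplification is $\Psi_0^3(q^9,-1,-1;q^{18})=0$, which you carry along unevaluated but must still justify, as the paper does. Your parenthetical observation that $\Lambda$ is independent of $z_0$ is correct (it follows from \eqref{sdodd}) and harmless.
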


\begin{proof}[Proof of Theorem \ref{main3}] 
We begin by decomposing $\mathcal{O}_{3}(\zeta_3; q)$ as follows. By (\ref{gen})--(\ref{Nd}) and (\ref{Dgensymmetry}), we have
\begin{equation} \label{rewrite}
\begin{aligned}
\mathcal{O}_{3}(\zeta_3; q) & = \sum_{n \geq 0} \sum_{s=0}^{2} \overline{N}_3(s, 3, n) \zeta_3^s q^n \\
& = \sum_{n \geq 0} \left( \overline{N}_3(0, 3, n) + \zeta_3 \overline{N}_3(1,3,n) + \zeta_3^2 \overline{N}_3(2,3,n) \right) q^n \\
& = \sum_{n \geq 0} \left(  \overline{N}_3(0, 3, n) -  \overline{N}_3(2, 3, n) \right) q^n \\
& =  \overline{D}_3(3,3) + \overline{D}_3(2,3)  - \left( \overline{D}_3(2,3) + \overline{D}_3(1,3) \right)
\end{aligned}
\end{equation} 
where we have used that $\overline{N}_3(1, 3, n) =  \overline{N}_3(2, 3, n)$ and $1 + \zeta_3 + \zeta_3^2 = 0$. We now take $d=3$, $a=M=3$, $z' = z'' = z_0 = -1$ in (iii) and $d=3$, $a=2$, $M=3$, $z' = z'' = z_0 = -1$ in (ii) of Theorem~\ref{main1}, respectively, and simplify using (\ref{flip1}), (\ref{eval}) and the fact that
\begin{equation*}
\Psi_{0}^{3}(q^9, -1, -1; q^{18}) = 0
\end{equation*}
to obtain from (\ref{rewrite})
\begin{equation} \label{Main1}
\begin{aligned}
\mathcal{O}_{3}(\zeta_3; q) &= 6q^{-36} m(q^{-27}, q^{162}, -1)  - 2 \zeta_3 \Lambda(3, \zeta_3, -1, -1) - 2\zeta_3^2 \Lambda(3, \zeta_3^2, -1, -1) \\
& +  4 \Psi_{2}^{3}(q^9, -1, -1; q^{18}) - 2 \Psi_{1}^{3}(q^9, -1, -1; q^{18}). \\
\end{aligned}
\end{equation}
By (\ref{genlam}), (\ref{j1}), (\ref{j2}) and (\ref{jnew1}), we have
\begin{equation} \label{step20}
\begin{aligned} 
- 2 \zeta_3 \Lambda(3, \zeta_3, -1, -1) & = -2q^{-1}\zeta_3^{5/3}
\Psi^3_{1}(\zeta_3^{-2/3}q^3,-1,-1; q^2)\\
&\quad + \frac {2\zeta_3^{5/3} J_2^3}{3qj(-1; q^2)}
\sum_{t=0}^2 \zeta_3^{-t}
\frac{
j(-\zeta_3^{t+1/3}/q;q^2)
j(-\zeta_3^{-t-1/3}q^2;q^2)}
{j(\zeta_3^{t+1/3}/q;q^2)
j(\zeta_3^{-t-1/3}q^2;q^2)
j(-\zeta_3^{-2t-2/3}q^3;q^2)
}
\\ 
& = -\frac{\zeta_3 J_2J_6J_{18}^4}
{2J_{4}^2 J_{36}^{2} j(-\zeta_3q^9;q^{18})}
\biggl(
\frac{j(\zeta_3q^{15};q^{18})}{j(-\zeta_3q^{15};q^{18})}
+\frac{j(\zeta_3q^{21};q^{18})}{j(-\zeta_3q^{21};q^{18})}
\biggr) \\ 
&\qquad \qquad \qquad +
\frac{\zeta_3J_2^4}{3J_4^2}
\sum_{t=0}^2 \frac{j(-\zeta_3^{t+1/3}q;q)}{j(\zeta_3^{t+1/3}q;q) j(-\zeta_3^{2t+2/3}q;q^2)}.
\end{aligned}
\end{equation}
Similarly,
\begin{equation} \label{step30}
\begin{aligned} 
- 2\zeta_3^2 \Lambda(3, \zeta_3^2, -1, -1) & =
\frac{\zeta_3^2 J_2J_6J_{18}^4}
{2J_{4}^2 J_{36}^{2} j(-\zeta_3^2q^9;q^{18})}
\biggl(\frac{j(\zeta_3q^{15};q^{18})}{j(-\zeta_3q^{15};q^{18})}
+\frac{j(\zeta_3q^{21};q^{18})}{j(-\zeta_3q^{21};q^{18})}
\biggr)
\\
& \qquad \qquad -\frac{\zeta_3^2J_2^4}{3J_4^2}
\sum_{t=0}^2
\frac{j(-\zeta_3^{t+2/3};q)}
{j(\zeta_3^{t+2/3};q) j(-\zeta^{2t+4/3}q;q^2)}
\end{aligned}
\end{equation}
after applying (\ref{j1}), (\ref{j2}) and (\ref{jnew2}). Using (\ref{j1}), (\ref{j2}), the $x=\zeta_3^2 q^{15}$ and $y=\zeta_3 q^{15}$ case of (\ref{MH14c}), (\ref{AHw2}) and (\ref{prodw}), one confirms 
\begin{equation} \label{combine}
\begin{aligned}
\frac{j(\zeta_3q^{15};q^{18})}{j(-\zeta_3q^{15};q^{18})} + \frac{j(\zeta_3q^{21};q^{18})} {j(-\zeta_3q^{21};q^{18})}
&= -2q^3 \zeta_3^2(1-\zeta_3^2) 
\frac
{J_{6}^2J_{9}^2J_{36}^2J_{54}^2}
{J_{3}J_{18}^6J_{27}}.
\end{aligned}
\end{equation}
Hence, taking the sum of the first terms on the right-hand sides of (\ref{step20}) and (\ref{step30}), respectively, combined with (\ref{AHwnew}) and (\ref{combine}) yields after simplification
\begin{equation} \label{inter}
\begin{aligned}
-(\zeta_3-\zeta_3^2)\frac{ J_2J_6J_{18}^4}
{2J_{4}^2 J_{36}^{2}j(-\zeta_3q^9;q^{18})}
\biggl(
\frac{j(\zeta_3q^{15};q^{18})}{j(-\zeta_3q^{15};q^{18})}
+\frac{j(\zeta_3q^{21};q^{18})}{j(-\zeta_3q^{21};q^{18})}
\biggr) 
= 3q^3
\frac{J_2J_6^3J_{9}J_{108}}
{J_3J_{4}^2J_{18}J_{36}}.
\end{aligned}
\end{equation}
It now remains to consider the sum of the second terms on the right-hand sides of (\ref{step20}) and (\ref{step30}), respectively. This is
\begin{equation} \label{eq:sumla}
(\zeta_3-\zeta_3^2)
\frac{J_2^4}{3J_4^2}
\sum_{t=0}^2 \frac{j(-\zeta_3^{t+1/3}q;q)}{j(\zeta_3^{t+1/3}q;q) j(-\zeta_3^{2t+2/3}q;q^2)}.
\end{equation}
We now apply (\ref{j2}) to the summand of (\ref{eq:sumla}), then take $x=\zeta_3^{t+1/3}$, $z=-1$ and $n=3$ in (\ref{MH14e}) to obtain
\begin{equation} \label{sumla2}
-(\zeta_3-\zeta_3^2)
\frac
  {2J_2^6J_3^3}
  {3J_1^4J_4^2j(\zeta_3;q^3)}
\sum_{k=0}^2
\frac{j(-\zeta_3q^k;q^3)}{j(-q^k;q^3)}
\sum_{t=0}^2
\frac{\zeta_3^{(t+1/3)k}}{j(-\zeta_3^{2(t+1/3)}q;q^2)}.
\end{equation}
By (\ref{prodw}),
\begin{equation} \label{middle}
j(-\zeta_{3}^{2/3} q; q^2) j(-\zeta_3^{8/3} q; q^2) j(-\zeta_3^{14/3} q; q^2)
= \frac{J_2^3J_3J_{12}J_{18}^2}
  {J_6^2J_9J_{36}}
\end{equation}
and so using (\ref{AHw1}) and (\ref{middle}) turns (\ref{sumla2}) into
\begin{equation} \label{sumla3}
\begin{aligned}
&-\zeta_3 \frac
{2J_2^3J_3^2J_6^2J_{36}}
{3J_1^4J_4^2J_{12}J_{18}^2}
\sum_{k=0}^2
\frac{j(-\zeta_3q^k;q^3)}{j(-q^k;q^3)}
\sum_{t=0}^2
{\zeta_3^{(t+1/3)k}}
{j(-\zeta_3^{2(t+4/3)}q;q^2)}
{j(-\zeta_3^{2(t+7/3)}q;q^2)}.
\end{aligned}
\end{equation}
Note that
\begin{equation*}
\begin{aligned}
& j(-\zeta_3^{2t+8/3} q;q^2) j(-\zeta_3^{2t+5/3}q;q^2) \\
& \qquad \qquad \qquad \qquad = \frac{J_2J_8J_{12}^2}{J_4J_6J_{24}}
j(-\zeta_3^{4(t+1/3)}q^2;q^4)
-\zeta_3^{2t + 2/3} q
\frac{J_4^2J_{24}}{J_8J_{12}}
j(-\zeta_3^{4(t+1/3)}q^4;q^4)
\end{aligned}
\end{equation*}
by \eqref{MH14b}, \eqref{AHw1} and \eqref{AHwnew} and so (\ref{sumla3}) equals
\begin{equation} \label{newstep2}
\begin{aligned}
-\frac
{2J_2^3J_3^2J_6^2J_{36}}
{3J_1^4J_4^2J_{12}J_{18}^2}
&\Biggl(\zeta_3
\frac{J_2J_8J_{12}^2}{J_4J_6J_{24}}
\sum_{k=0}^2
\frac{j(-\zeta_3q^k;q^3)}{j(-q^k;q^3)}
\sum_{t=0}^2
{\zeta_3^{(t+1/3)k}}
j(-\zeta_3^{4(t+1/3)}q^2;q^4)\,
\\
&-q \frac{J_4^2J_{24}}{J_8J_{12}}
\sum_{k=0}^2
{\zeta_3^{(k+5)/3}}
\frac{j(-\zeta_3q^k;q^3)}{j(-q^k;q^3)}
\sum_{t=0}^2
{\zeta_3^{(k+2)t}}
j(-\zeta_3^{(t+4/3)}q^4;q^4)\Biggr).
\end{aligned}
\end{equation}
We now simplify the first line in (\ref{newstep2}). By (\ref{j}) and (\ref{sim}),
\begin{equation} \label{newstep3}
\begin{aligned}
\sum_{t=0}^2 \zeta_3^{(t+1/3)k} j(-\zeta_3^{4(t+1/3)}q^2;q^4) & = \sum_{n\in\mathbb Z} q^{2n^2} \sum_{t=0}^2 \zeta_3^{(t+1/3)(4n+k)} \\
& = 3 \sum_{\substack{n \,\equiv \, 2k \;(\text{mod}\; 3)}} q^{2n^2} \zeta_3^{(4n+k)/3} \\
& = 3 \sum_{s\in\mathbb Z} q^{36s(s-1)/2+(24k + 18)s+8k^2} \zeta_3^{s} \\
&= 3 q^{8k^2}j(-\zeta_3q^{24k + 18};q^{36}).
\end{aligned}
\end{equation}
First, applying (\ref{AHwnew}) to (\ref{newstep3}), the $k=0$ term in the first line of (\ref{newstep2}) equals
\begin{equation} \label{simpk0}
-\frac
  {3J_3^3J_{18}^2J_{72}J_{108}^2}
  {2J_6^3J_9J_{36}J_{54}J_{216}}.
\end{equation}
Next, applying (\ref{AHwnew3a}) to (\ref{newstep3}) and the $z=-\zeta_3 q$, $q \to q^3$ and $n=3$ case of (\ref{MH12f}) and simplifying, the $k=1$ term in the first line of (\ref{newstep2}) equals
\begin{equation}  \label{simpk1}
3q^2 \frac{J_1J_6J_{108}}{J_2J_3^2}
\left( AD-BE-CD+CE +\zeta_3(AE+BD-BE-CD) \right)
\end{equation}
where $A$, $B$, $C$, $D$ and $E$ are given in (\ref{ABCDEFG}). Finally, applying (\ref{j1}), (\ref{j2}) and (\ref{AHwnew3a}) to (\ref{newstep3}) and simplifying, the $k=2$ term in the first line of (\ref{newstep2}) equals
\begin{equation} \label{simpk2}
3q^2 \frac{J_1J_6J_{108}}{J_2J_3^2}
(AD-AE-BD+CE +\zeta_3(-AE-BD+BE-CD)).
\end{equation}
Combining (\ref{simpk0})--(\ref{simpk2}) and simplifying, the first line of (\ref{newstep2}) equals
\begin{equation} \label{newterm}
-\frac{J_2^4J_3^2J_6J_8J_{12}J_{36}}{J_1^4J_4^3J_{18}^2J_{24}}
\biggl(-
\frac
  {J_3^3J_{18}^2J_{72}J_{108}^2}
  {J_6^3J_9J_{36}J_{54}J_{216}}
+2q^2 \frac{J_1J_6J_{108}}{J_2J_3^2}
(2AD-AE-BD-BE-CD+2CE)
\biggr).
\end{equation}
We now simplify the second line in (\ref{newstep2}). Similar to (\ref{newstep3}), we find
\begin{equation} \label{newstep3b}
\sum_{t=0}^2
\zeta_3^{(k+2)t}
j(-\zeta_3^{t+4/3};q^4)
= 3\zeta_3^{(5k-5)/3} 
q^{2k^2-6k+4}
j(-\zeta_3q^{36-12k};q^{36}).
\end{equation}
Applying (\ref{AHw2}) to (\ref{newstep3b}), the $k=0$ term in the second line of (\ref{newstep2}) equals
\begin{equation} \label{ssimpk0}
3q^{4}
\frac{J_3^3J_{18}J_{36}^2J_{216}}
{2J_6^3J_9J_{72}J_{108}}.
\end{equation}
Next, applying (\ref{j2}) and (\ref{AHwnew2a}) to (\ref{newstep3b}) and (again) the $z=-\zeta_3 q$, $q \to q^3$ and $n=3$ case of (\ref{MH12f}) and simplifying, the $k=1$ term in the second line of (\ref{newstep2}) becomes
\begin{equation} \label{ssimpk1}
3 \frac{J_1J_6J_{108}}{J_2J_3^2}
(-AF-AG+BF+CG
+\zeta_3(-AF-BG+CF+CG))
\end{equation}
where $F$ and $G$ are given in (\ref{ABCDEFG}). Finally, one can similarly show that the $k=2$ term in the second line of (\ref{newstep2}) is
\begin{equation} \label{ssimpk2}
3 \frac{J_1J_6J_{108}}{J_2J_3^2}
(-AG+BF+BG-CF + \zeta_3 (AF+BG-CF-CG)).
\end{equation}
Combining (\ref{ssimpk0})--(\ref{ssimpk2}) and simplifying, the second line of (\ref{newstep2}) equals
\begin{equation} \label{newterm2}
-q\frac{J_2^3J_3^2J_6^2J_{24}J_{36}}{J_1^2J_4^2J_8J_{12}^2J_{18}^2}
\biggl(q^{4}
\frac{J_3^3J_{18}J_{36}^2J_{216}}
{J_6^3J_9J_{72}J_{108}}
+ 2\frac{J_1J_6J_{108}}{J_2J_3^2} \bigl(-2AG + 2BF + BG - AF - CF + CG \bigr) \biggr).
\end{equation}
As \eqref{newstep2} is the sum of (\ref{newterm}) and (\ref{newterm2}), we obtain
\begin{equation} \label{newstep4}
\begin{aligned}
&
\frac
{J_2^3J_3^2J_6^2J_{36}}
{J_1^4J_4^2J_{12}J_{18}^2}
\Biggl(
\frac{J_2J_8J_{12}^2}{J_4J_6J_{24}}
\biggl(
\frac
  {J_3^3J_{18}^2J_{72}J_{108}^2}
  {J_6^3J_9J_{36}J_{54}J_{216}}
-2q^2 \frac{J_1J_6J_{108}}{J_2J_3^2}
(2AD-AE-BD-BE-CD+2CE)
\biggr)
\\
&\qquad\qquad +
q\frac{J_4^2J_{24}}{J_8J_{12}}
\biggl(q^{4}
\frac{J_3^3J_{18}J_{36}^2J_{216}}
{J_6^3J_9J_{72}J_{108}}
+ 2\frac{J_1J_6J_{108}}{J_2J_3^2} \bigl(-2AG + 2BF + BG - AF - CF + CG \bigr) \biggr) \Biggr).
\end{aligned}
\end{equation}
Note that (\ref{newstep4}) can be rearranged as
\begin{equation} \label{laststep}
\begin{aligned}
\frac{J_3^2J_6^2J_{36}}{J_{12}J_{18}^2} 
\Biggl(
&
\frac{J_3^3J_{12}^2J_{18}^2J_{72}J_{108}^2}
{J_6^4J_9J_{24}J_{36}J_{54} J_{216}}
\frac{J_2^4J_8}{J_1J_4^3}
\frac 1 {J_1^3}\\
& -2q^2 \frac{J_{12}^2J_{108}}{J_6J_{24}}
\bigl( 2AD - AE - BD - BE - CD + 2CE \bigr)
\frac{J_2^4J_8}{J_1J_4^3}
\frac 1 {J_1^3}
\frac{J_1J_6}{J_2J_3^2}\\
& +  
 q^5 
\frac
{J_3^3J_{18}J_{24}J_{36}^2J_{216}}
{J_6^3J_{9}J_{12}J_{72}J_{108}}
\frac{J_2^3}{J_1J_8}
\frac 1 {J_1^3}
\\
& +2q
\frac{J_{24}J_{108}}{J_{12}} 
\bigl(-2AG + 2BF + BG - AF - CF + CG \bigr)
\frac{J_2^3}{J_1J_8}
\frac 1 {J_1^3}
\frac{J_1J_6}{J_2J_3^2}
\Biggr).
\end{aligned}
\end{equation}
Moreover, using (\ref{Psikndef}), (\ref{j1}), (\ref{j2}) and simplifying, one can check 
\begin{equation} \label{step4}
\begin{aligned}
&4 \Psi_{2}^{3}(q^9, -1, -1; q^{18}) - 2 \Psi_{1}^{3}(q^9, -1, -1; q^{18})  \\
&\qquad \qquad \qquad \qquad \qquad \qquad \qquad =
-\frac{3}{2} q^{-9}
\frac
  {J_{18}J_{27}J_{108}J_{162}^5}
  {J_{36}^2J_{54}J_{81}J_{324}^3}
\biggl(
\frac{j(q^{27}; q^{162})}{j(-q^{27};q^{162})}
+
\frac{j(q^{81}; q^{162})}{j(-q^{81};q^{162})}
\biggr).
\end{aligned}
\end{equation}
We now insert (\ref{3dis1})--(\ref{3dis4}) into (\ref{laststep}) and (\ref{3dis5}) into (\ref{inter}) and combine with (\ref{step4}). After substituting the resulting expressions into (\ref{Main1}) and recalling (\ref{GN}) and (\ref{HN}), we arrive at (\ref{3dis}). This proves the result.
\end{proof}

\section*{Acknowledgements}
The authors were partially funded by the Irish Research Council Advanced Laureate Award IRCLA/2023/1934. The authors would like to thank Amanda Folsom and the referee for their helpful comments and suggestions
which improved the exposition of our paper.


\begin{thebibliography}{999}

\bibitem{AHadvances}
G.E. Andrews, D. Hickerson, \emph{Ramanujan's ``lost'' notebook. VII. The sixth order mock theta functions},
Adv. Math. \textbf{89} (1991), no. 1, 60--105.

\bibitem{asd}
A.O.L. Atkin, P. Swinnerton-Dyer, \emph{Some properties of partitions}, Proc. London Math. Soc. (3) \textbf{4} (1954), 84--106.

\bibitem{ah}
A.O.L. Atkin, S.M. Hussain, \emph{Some properties of partitions. II.}, Trans. Amer. Math. Soc. \textbf{89} (1958), 184--200.

\bibitem{bo}
N.D. Baruah, K.K. Ojah, \emph{Some congruences deducible from Ramanujan's cubic continued fraction}, Int. J. Number Theory \textbf{7} (2011), no. 5, 1331--1343.

\bibitem{bg}
A. Berkovich, F.G. Garvan, \emph{K. Saito's conjecture for nonnegative eta products and analogous results for other infinite products}, J. Number Theory \textbf{128} (2008), no. 6, 1731--1748.

\bibitem{berndt}
B. Berndt, \emph{Ramanujan's notebooks. Part III}, Springer-Verlag, New York, 1991. 

\bibitem{boro}
N. Borozenets, \emph{Deviation of the rank and crank modulo $11$}, Ramanujan J. \textbf{64} (2024), no. 4, 1357--1420.

\bibitem{bl}
K. Bringmann, J. Lovejoy, \emph{Dyson's rank, overpartitions, and weak Maass forms}, Int. Math. Res. Not. IMRN 2007, no. 19, Art. ID rnm063, 34 pp.

\bibitem{cs}
H. Cohen, F. Str{\"o}mberg, \emph{Modular forms. A classical approach}, Grad. Stud. Math., 179, American Mathematical Society, Providence, RI, 2017.

\bibitem{cl}
S. Corteel, J. Lovejoy, \emph{Overpartitions}, in: Srinivasa Ramanujan: His Life, Legacy, and Mathematical Influence, Springer Nature, to appear.

\bibitem{dewar}
M. Dewar, \emph{The nonholomorphic parts of certain weak Maass forms}, J. Number Theory \textbf{130} (2010), no. 3, 559--573.

\bibitem{dfnsx}
A. Dietrich, A. Folsom, K. Ng, C. Stewart and S. Xu, \emph{Overpartition ranks and quantum modular forms}, Res. Number Theory \textbf{8} (2022), no. 3, Paper No. 45, 16 pp.

\bibitem{ekin}
A.B. Ekin, \emph{Inequalities for the crank}, J. Combin. Theory Ser. A \textbf{83} (1998), no. 2, 283--289.

\bibitem{garvan}
F.G. Garvan, \emph{New combinatorial interpretations of Ramanujan's partition congruences mod $5$, $7$ and  $11$}, Trans. Amer. Math. Soc. \textbf{305} (1988), no. 1, 47--77.

\bibitem{hm1}
D.R. Hickerson, E.T. Mortenson, \emph{Hecke-type double sums, Appell--Lerch sums, and mock theta functions, I}, Proc. Lond. Math. Soc. (3) \textbf{109} (2014), no. 2, 382--422.

\bibitem{hm2}
D. Hickerson, E. Mortenson, \emph{Dyson's ranks and Appell--Lerch sums}, Math. Ann. \textbf{367} (2017), no. 1-2, 373--395.

\bibitem{jss}
C. Jennings-Shaffer, H. Swisher, \emph{Mock modularity of the $M_d$-rank of overpartitions}, J. Math. Anal. Appl. \textbf{466} (2018), no. 2, 1144--1189.

\bibitem{love1}
J. Lovejoy, \emph{Rank and conjugation for the Frobenius representation of an overpartition}, Ann. Comb. \textbf{9} (2005), no. 3, 321--334.

\bibitem{love2}
J. Lovejoy, \emph{Rank and conjugation for a second Frobenius representation of an overpartition}, Ann. Comb. \textbf{12} (2008), no. 1, 101--113.

\bibitem{loover}
J. Lovejoy, R. Osburn, \emph{Rank deviations for overpartitions}, Res. Number Theory \textbf{10} (2024), no. 3, Paper No. 67, 14 pp.

\bibitem{rm}
R. Mao, \emph{Asymptotics for rank moments of overpartitions}, Int. J. Number Theory \textbf{10} (2014), no. 8, 2011--2036.

\bibitem{morrill}
T. Morrill, \emph{Two families of buffered Frobenius representations of overpartitions}, Ann. Comb. \textbf{23} (2019), no. 1, 103--141.

\bibitem{lost}
S. Ramanujan, \emph{The lost notebook and other unpublished papers}, Springer-Verlag, Berlin; Narosa Publishing House, New Delhi, 1988.

\bibitem{zagb}
D. Zagier, \emph{Ramanujan's mock theta functions and their applications (after Zwegers and Ono-Bringmann)}, S{\'e}minaire Bourbaki. Vol. 2007/2008, Ast{\'e}risque No. \textbf{326} (2009), Exp. No. 986, vii-viii, 143--164.

\bibitem{zwegers}
S. Zwegers, \emph{Mock theta functions}, Ph.D. thesis, Universiteit Utrecht (2002).

\end{thebibliography}
\end{document}